\renewcommand {\S}	{\mathbb{S}}
\newcommand {\R}	{\mathbb{R}}
\newcommand {\N}	{\mathbb{N}}
\newcommand {\Z}	{\mathbb{Z}}
\newcommand {\C}	{\mathbb{C}}
\DeclareMathOperator{\re}{Re}
\DeclareMathOperator{\Id}{Id}
\DeclareMathOperator{\dvol}{dvol}
\newcommand{\rC}{\mathrm{C}}
\newcommand{\ddn}{\frac{{\partial}^a}{\partial n}}
\newcommand{\tddn}{\tfrac{{\partial}^a}{\partial n}}
\renewcommand{\epsilon}{\varepsilon}
\setlist[enumerate]{font = \normalfont}
\theoremstyle{plain}
\newtheorem{thm}{Theorem}[section]
\newaliascnt{cor}{thm}
\newaliascnt{prop}{thm}
\newaliascnt{lem}{thm}
\newtheorem{cor}[cor]{Corollary}
\newtheorem{prop}[prop]{Proposition}
\newtheorem{lem}[lem]{Lemma}
\newcounter{stp}
\newcounter{stpi}
\newcounter{stpci}
\newcounter{stpiii}
\theoremstyle{theorem}
\theoremstyle{definition}
\newaliascnt{defn}{thm}
\newaliascnt{asu}{thm}
\newaliascnt{con}{thm}
\theoremstyle{remark}
\newaliascnt{rem}{thm}
\newaliascnt{exa}{thm}
\newaliascnt{masu}{thm}
\newaliascnt{nota}{thm}
\newaliascnt{sett}{thm}
\newtheorem{rem}[rem]{Remark}
\numberwithin{equation}{section}
\title [Elliptic operators with Dirichlet boundary conditions]
{Strictly elliptic operators with Dirichlet boundary conditions on spaces of continuous functions on manifolds}
\author{Tim Binz}
\subjclass{47D06, 34G10, 47E05, 47F05}%
\keywords{Dirichlet boundary conditions, analytic semigroup, Riemmanian manifolds}%
\date{\today}%
\begin{document}

\begin{abstract}
We study strictly elliptic differential operators 
with Dirichlet boundary conditions on the space $\mathrm{C}(\overline{M})$
of continuous functions on a compact Riemannian manifold $\overline{M}$ with boundary and prove sectoriality 
with optimal angle $\nicefrac{\pi}{2}$. 
\end{abstract}

\maketitle

\section{Introduction}

Our starting point is a smooth compact Riemannian manifold $\overline{M}$ of dimension $n$ with
smooth boundary $\partial M$
and Riemannian metric $g$ and the 
initial value-boundary problem
\begin{align}
\begin{cases}
\frac{d}{dt} u(t) &= \sqrt{|a|} \text{div}_g \left( \frac{1}{\sqrt{|a|}} a \nabla_M^g u(t)\right) +
\langle b, \nabla_M^g u(t) \rangle + c u(t) \phantom{0} \text{ for } t > 0, \\
u(t)|_{\partial M}  &= 0 \phantom{ \sqrt{|a|} \text{div}_g \left( \frac{1}{\sqrt{|a|}} a \nabla_M^g u(t)\right) +
\langle b, \nabla_M^g u(t) \rangle + c u(t) } \text{ for } t > 0, \\
u(0) &= u_0 .
\end{cases} \label{I-B-P} \tag{IBP}
\end{align}
Here $a$ is a smooth $(1,1)$-tensorfield, $b \in C(\overline{M},\R^n)$ and $c \in C(\overline{M},\R)$. 
We are interested in existence, uniqueness and qualitative behaviour of the solution of this initial value-boundary problem. 
To study these properties systematically, the theory of operator semigroups (cf.  \cite{Ama:95}, \cite{EN:00}, \cite{Eva:98}, \cite{Lun:95}) can be used.
We choose the Banach space $\mathrm{C}(\overline{M})$ and define the
\emph{differential operator with Dirichlet boundary condition}
\begin{align*}
A_0 f &:= \sqrt{|a|} \text{div}_g \left( \frac{1}{\sqrt{|a|}} a \nabla_M^g f \right) +
\langle b, \nabla_M^g u(t) \rangle + c f \\
\text{ with domain } \\
D(A_0) &:= \left\{ f \in \bigcap_{p \geq 1} W^{2,p}(M) \cap \rC_0(M) \colon A_0 f \in \mathrm{C}(\overline{M}) \right\} .
\end{align*}
Then the initial value-boundary problem \eqref{I-B-P} is equivalent to the abstract Cauchy problem
\begin{align}
\begin{cases}
\frac{d}{dt} u(t) &= A_0 u(t) \text{ for } t > 0, \\
u(0) &= u_0  
\end{cases} \label{ACP} \tag{ACP}
\end{align}
in
$\mathrm{C}(\overline{M})$. 
In this paper we show that the solution $u$ of the above problems can be extended analytically in the time variable $t$ to the open complex right half-plane. In operator theoretic terms this corresponds to the fact that $A_0$ 
is sectorial of angle $\nicefrac{\pi}{2}$. 
Here is our main theorem.

\begin{thm}\label{mthm}
The operator $A_0$ is sectorial of angle $\nicefrac{\pi}{2}$ and has compact resolvent on $\mathrm{C}(\overline{M})$. 
\end{thm}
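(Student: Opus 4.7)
The strategy is to reduce the result on the manifold to the classical Euclidean theorem of Stewart (see e.g.\ Lunardi, \S 3.1.5), which establishes sectoriality of angle $\nicefrac{\pi}{2}$ for strictly elliptic operators with Dirichlet boundary conditions on $\mathrm{C}(\overline{\Omega})$ for bounded smooth domains $\Omega \subset \R^n$. The passage to $\overline{M}$ is achieved by separating the principal part and patching local resolvents via a partition of unity.

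First I would split $A_0 = A + B$ where $A u = \sqrt{|a|}\,\mathrm{div}_g(\tfrac{1}{\sqrt{|a|}}\, a \nabla_M^g u)$ carries the domain $D(A_0)$, and $Bu = \langle b, \nabla_M^g u\rangle + c u$ collects the first- and zeroth-order terms. By interior and boundary elliptic regularity together with a Gagliardo-Nirenberg-type interpolation on the compact manifold $\overline{M}$, the operator $B$ is $A$-bounded with relative bound zero. Standard perturbation results for sectorial operators preserve the optimal sector angle under such perturbations, so it is enough to prove the theorem for $A$.

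Next I would fix a finite atlas $\{(U_k,\varphi_k)\}_{k=1}^N$ of $\overline{M}$ with boundary charts mapping onto half-balls $B^+ \subset \R^n$ (with $\partial M \cap U_k$ going to the flat face) and interior charts onto balls, together with a subordinate partition of unity $\{\eta_k\}$ and cutoffs $\{\chi_k\}$ satisfying $\chi_k \equiv 1$ on $\mathrm{supp}\,\eta_k$. In local coordinates, $A$ becomes a strictly elliptic second-order operator in non-divergence form with continuous coefficients (the Christoffel symbols being absorbed into lower-order terms which are harmless for the principal symbol estimates). The Stewart-Lunardi theorem then yields, for each chart, a local resolvent $(\lambda - A_k)^{-1}$ on the chart with Dirichlet condition on the flat face, satisfying $\|(\lambda - A_k)^{-1}\| \leq M/|\lambda|$ uniformly on every proper subsector $\Sigma_{\pi/2+\theta}$ of the right half-plane.

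Given $f\in\mathrm{C}(\overline{M})$ and $\lambda$ in this sector, set $v_k := (\lambda - A_k)^{-1}(\eta_k f)$ and form $u := \sum_k \chi_k v_k$. A direct calculation gives
\[
(\lambda - A)u \;=\; f \;+\; \sum_k [A,\chi_k]\,v_k,
\]
where $[A,\chi_k]$ is a first-order differential operator whose coefficients are supported where $\nabla \chi_k \neq 0$. The Stewart-type gradient bound $\|\nabla v_k\|_\infty \leq C|\lambda|^{-1/2}\|f\|_\infty$ shows the error is of order $|\lambda|^{-1/2}\|f\|_\infty$; for $|\lambda|$ large a Neumann series then inverts $\mathrm{Id} + \sum_k[A,\chi_k](\lambda-A_k)^{-1}\eta_k$ and produces a global right inverse of $\lambda - A$ with norm $\leq M'/|\lambda|$. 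Combined with the easy verification that $A_0$ is closed and densely defined, this proves sectoriality of angle $\nicefrac{\pi}{2}$. The compactness of the resolvent is then immediate: elliptic regularity gives $D(A_0) \subset \bigcap_p W^{2,p}(M)$, and the Rellich-Kondrachov embedding $W^{2,p}(M) \hookrightarrow \mathrm{C}(\overline{M})$ is compact for $p > \nicefrac{n}{2}$ on the compact manifold $\overline{M}$.

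The hard part is the gluing step: obtaining the gradient bound $\|\nabla v_k\|_\infty \lesssim |\lambda|^{-1/2}\|f\|_\infty$ uniformly on the sector up to the limiting angle $\nicefrac{\pi}{2}$. Unlike its $L^p$ counterpart, this estimate in the sup-norm topology is subtle and lies at the heart of Stewart's original maximum-principle argument; transporting it faithfully to curved charts requires tracking how $\nabla_M^g$ and the divergence transform under $\varphi_k$ and verifying that the resulting lower-order perturbations do not degrade the uniform sectorial estimate.
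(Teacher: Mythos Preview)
Your approach is sound in outline but follows a genuinely different route from the paper. The paper does \emph{not} localize to charts and invoke Stewart's Euclidean theorem. Instead it proceeds intrinsically in two steps. First, for the Laplace--Beltrami operator $\Delta^g_0$ it constructs an explicit approximate Green's function on $\overline{M}$ using the modified Bessel kernel $K_{n/2-1}(\sqrt{\lambda}\,\rho(x,y))/\rho(x,y)^{n/2-1}$, together with a reflected term near $\partial M$ to enforce the Dirichlet condition; direct estimates on these kernels give $\|G_\lambda\|\le C(\eta)/|\lambda|$ and $\|(\lambda-\Delta^g)G_\lambda-\mathrm{Id}\|\le C(\eta)/\sqrt{|\lambda|}$ on every sector $\Sigma_{\pi-\eta}$, and a Neumann series yields the resolvent bound. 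Second, for a general strictly elliptic $A_0$ the paper does not patch local resolvents but instead changes the metric: setting $\tilde g^{kl}=a^k_i g^{il}$ turns the principal part into exactly $\Delta^{\tilde g}$, so the first step applies directly; the lower-order terms are then absorbed as a relatively bounded perturbation of bound $0$, as in your proposal. Compactness of the resolvent is obtained from the compact embedding $[D(A_0)]\hookrightarrow \mathrm{C}^1(\overline{M})\hookrightarrow \mathrm{C}(\overline{M})$.

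Your chart-based localization is the classical alternative and should also work; its cost is exactly the gradient estimate $\|\nabla v_k\|_\infty\lesssim |\lambda|^{-1/2}\|f\|_\infty$ uniformly over the sector, which you correctly identify as the crux. The paper's method buys a chart-free argument and an explicit parametrix, while the metric change neatly sidesteps dealing with variable principal coefficients in coordinates. Two small corrections to your proposal: $A_0$ is \emph{not} densely defined on $\mathrm{C}(\overline{M})$ (its domain lies in $\mathrm{C}_0(M)$), though sectoriality in the Lunardi/ABHN sense does not require density, so simply drop that claim; and your construction yields only a right inverse of $\lambda-A_0$, so you must separately check injectivity (the paper does this via Green's formula in $L^2$) before concluding $\lambda\in\rho(A_0)$.
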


For domains $\Omega \subset \R^n$ the generation of analytic semigroups by elliptic operators with Dirichlet boundary conditions on different spaces is well known. 
It was first shown by Browder in \cite{Bro:61} for $L^2(\Omega)$, by Agmon in \cite{Agm:62} for $L^p(\Omega)$ (see also \cite[Chap.~3.1.1]{Lun:95}) and by Stewart in \cite{Ste:74} for $\mathrm{C}(\overline{\Omega})$
(see also \cite[Chap.~3.1.5]{Lun:95}). By Stewart's method one
even gets the optimal angle of analyticity $\nicefrac{\pi}{2}$.  
Using standard localization arguments the result of \cite{Ste:74} can be extend to elliptic operators on manifolds with boundary (\autoref{mthm}), see
\hyperref[sec:localisation]{Section \ref*{sec:localisation}}. Unfortunately, using this method we do not know how the resolvents look like.

\bigskip 

The angle $\nicefrac{\pi}{2}$ of analyticity of $A_0$ plays an important role in the generation of analytic semigroups by elliptic
differential operators with Wentzell boundary conditions
on spaces of continuous functions. Many authors are interested in this topic, and we refer, e.g., to \cite{CM:98}, \cite{FGGR:02}, \cite{Eng:03}, 
\cite{EF:05}, \cite{FGGR:10}. 
In this context one starts from the ``maximal'' operator $A_m:D(A_m)\subseteq\rC(\overline{M})\to\rC(\overline{M})$ in divergence form, given by
\begin{align*}
A_m f &:= \sqrt{|a|} \text{div}_g \left( \frac{1}{\sqrt{|a|}} a \nabla_M^g f\right) +
\langle b, \nabla_M^g f \rangle + c f \\
\text{ with domain } \\
D(A_m) &:= \left\{ f \in \bigcap_{p \geq 1} W^{2,p}(M) \colon A_m f \in \mathrm{C}(\overline{M}) \right\} .
\end{align*}
Moreover, using the outer co-normal derivative $\ddn:D(\ddn)\subset\mathrm{C}(\overline{M})\to\mathrm{C}(\partial M)$, a constant $\beta<0$ and $\gamma\in\mathrm{C}(\partial M)$, one defines the differential operator $A$ with \emph{generalized Wentzell boundary conditions} by requiring
\begin{equation}\label{eq:bc-W-Lap} 
f\in D(A)
\quad:\iff\quad
f \in D(A_m) \text{ and }
A_m f\big|_{\partial M}=\beta\cdot \tddn f+\gamma\cdot f\big|_{\partial M} .
\end{equation}
The main theorem in \cite{BE:18} shows that this operator $A$ can be
splitted into the operator $A_0$ with Dirichlet boundary conditions on $\rC(\overline{M})$ and the \emph{Dirichlet-to-Neumann operator} $N := \beta\cdot \tddn L_0$ on $\rC(\partial M)$, where $L_0 \varphi = f$ denotes the unique solution of
\begin{equation*}
\begin{cases}
A_m f=0,\\
f|_\partial M=\varphi.
\end{cases}
\end{equation*}
Using \autoref{mthm} and \cite[Thm.~3.1~\& ~Cor.~3.2]{BE:18} 
one obtains the following result.

\begin{cor}
The operator $A$ with Wentzell boundary conditions generates a compact and analytic semigroup of angle $\theta > 0$ on $\rC(\overline{M})$ if and only if the Dirichlet-to-Neumann operator $N$ does so on $\rC(\partial M)$. 
\end{cor}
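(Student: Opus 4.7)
The plan is to combine the newly established \autoref{mthm} with the abstract splitting result \cite[Thm.~3.1 \& Cor.~3.2]{BE:18}. That result decomposes the Wentzell operator $A$ into the Dirichlet realization $A_0$ on $\rC(\overline{M})$ and the Dirichlet-to-Neumann operator $N=\beta\cdot\tddn L_0$ on $\rC(\partial M)$, the coupling being carried by the harmonic extension $L_0$. In its typical formulation it reduces the generation of a compact, analytic semigroup of angle $\theta$ by $A$ to the simultaneous generation of compact, analytic semigroups of angle (at least) $\theta$ by $A_0$ on $\rC(\overline{M})$ and by $N$ on $\rC(\partial M)$, together with a resolvent decomposition of the type
\[
R(\lambda,A) = R(\lambda,A_0) + L_\lambda R(\lambda,N)\,\tddn R(\lambda,A_0),
\]
where $L_\lambda$ is the harmonic extension for the shifted operator $\lambda - A_m$.

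The single decisive step is then to feed \autoref{mthm} into this abstract equivalence. Since $A_0$ is sectorial of the optimal angle $\nicefrac{\pi}{2}$ and has compact resolvent on $\rC(\overline{M})$, the hypothesis on $A_0$ appearing in the splitting is \emph{automatically} fulfilled for every admissible angle $\theta\in(0,\nicefrac{\pi}{2})$, with the required compactness already in place. The two-sided equivalence of \cite{BE:18} therefore collapses to a single condition on $N$: the operator $A$ generates a compact analytic semigroup of some angle $\theta>0$ on $\rC(\overline{M})$ if and only if $N$ does so on $\rC(\partial M)$. Both implications are read off the same resolvent decomposition, using in one direction the compactness of $R(\lambda,A_0)$ to upgrade boundedness of $R(\lambda,N)$ to compactness for $R(\lambda,A)$, and in the other direction the boundedness of $\tddn R(\lambda,A_0)$ and $L_\lambda$ to transfer compactness and sectoriality back from $A$ to $N$.

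The only step requiring care --- and thus the main (though rather modest) obstacle --- is verifying that the present Riemannian data $(M,g,a,b,c,\beta,\gamma)$ actually meet the abstract assumptions of \cite[Thm.~3.1 \& Cor.~3.2]{BE:18}. Concretely, one needs that $L_0:\rC(\partial M)\to \rC(\overline{M})$ is well-defined and bounded, that $\tddn$ makes sense on $D(A_m)$, and that the resolvent identity above holds in the chosen functional-analytic setting. Each of these is a standard consequence of Schauder theory and elliptic regularity on a smooth compact manifold with boundary, and all are already implicit in the framework used throughout the paper. Once that bookkeeping is done, the corollary is an immediate consequence of \autoref{mthm} and the cited splitting.
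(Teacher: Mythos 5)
Your proposal is correct and follows exactly the route the paper takes: the corollary is read off directly from the abstract splitting equivalence of \cite[Thm.~3.1 \& Cor.~3.2]{BE:18}, once \autoref{mthm} supplies the sectoriality of angle $\nicefrac{\pi}{2}$ and compactness of the resolvent for the Dirichlet operator $A_0$. The extra detail you give on the resolvent decomposition and hypothesis-checking is a sound elaboration of what the paper leaves implicit, but it is not a different argument.
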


In an upcoming paper \cite{Bin:18} we prove the latter statement with the optimal angle $\nicefrac{\pi}{2}$ and conclude that elliptic differential operators with Wentzell boundary conditions generate compact and
analytic semigroups of angle $\nicefrac{\pi}{2}$ on $\mathrm{C}(\overline{M})$. 

\pagebreak

This paper is organized as follows. 

\smallskip 

In Section 2 we study the special case where $A_0$ is the Laplace-Beltrami operator with Dirichlet boundary conditions. We approximate its resolvents by modifying the Green's functions of the Laplace operator on $\R^n$, study the scaling of the error of the 
Laplace-Beltrami operator and prove estimates for the associated Green's functions.
Finally, one obtains the sectoriality of angle $\nicefrac{\pi}{2}$ for
the Laplace-Beltrami operator with Dirichlet boundary conditions
on $\mathrm{C}(\overline{M})$.

\smallskip

In Section 3 the main result from Section 2 is extended to arbitrary
strictly elliptic operators. Introducing a new
Riemannian metric, induced by the coefficients of the second order part of the elliptic operator, the operator takes a simpler
form: Up to a relatively bounded perturbation of bound $0$, it is 
a Laplace-Beltrami operator for the new metric. 
Regularity and perturbation theory yield the main theorem in its full generality. 

\smallskip 

In Section 4 we give a more elementary proof of our main theorem using standard localisation arguments. 

\bigskip

In this paper the following notation is used. 
For a closed operator $T \colon D(T) \subset X \rightarrow X$ on a Banach space $X$ we denote by $[D(T)]$ the Banach space $D(T)$ 
equipped with the graph norm $\| \bullet \|_T := \| \bullet \|_X + \| T (\bullet) \|_X$ and indicate by $\hookrightarrow$ a continuous
and by $\stackrel{c}{\hookrightarrow}$ a compact embedding. 
Moreover we use Einstein's notation of sums, i.e., 
\begin{align*}
x_k y^k := \sum_{k = 1}^n x_k y^k
\end{align*}
for $x := (x_1, \dots, x_n), y := (y_1, \dots, y_n)$. Furthermore we denote by $\R_+ := \{ r \in \R : r > 0\}$ the positive real numbers and by $\R_- := \R \setminus \R_+$ the non-positive real numbers. Besides one defines the sector by $\Sigma_{\theta} := \{ z \in \C \setminus \{0\} : |\text{arg}(z)| < \theta \}$. Using the distance function $d$ on $\overline{M}$ we denote by $B_R(x) := \{ y \in \overline{M} : d(x,y) < R \}$.

\section*{Acknowledgement}

The author wishes to thank Professor Simon Brendle and Professor Klaus Engel for important suggestions and fruitful discussions. 
Moreover the author thanks the referee for his many helpful comments. 

\section{Laplace-Beltrami operators with Dirichlet boundary conditions}

In this section we consider the special case where $A_0$ is the Laplace-Beltrami operator with Dirichlet boundary conditions, i.e.,
\begin{align}
\Delta^g_0 f &:= \Delta^g f = \text{div}_g(\nabla^g f) = g^{ij} \partial^2_{ij} f - g^{ij} \ ^g\Gamma^{k}_{ij} \partial_k f , \notag \\
D(\Delta^g_0) &:= \left\{ f \in \bigcap_{p \geq 1} W^{2,p}(M) \cap \mathrm{C}_0(\overline{M}) \colon \Delta^g f \in \mathrm{C}(\overline{M}) \right\}
\label{Def:Delta}
\end{align}
on the space $\mathrm{C}(\overline{M})$ of continuous functions on $\overline{M}$.
Here 
\begin{align*} 
^g \Gamma^k_{ij} := \frac{1}{2} g^{kl} \left(
\partial_i g_{jl} + \partial_j g_{il} - \partial_l g_{ij}
\right)
\end{align*}
denote the Christoffel symbols of the Riemannian metric $g$.

\begin{prop}\label{injective}
For all $\lambda \in \C \setminus \R_-$ the operator $\lambda - \Delta^g_0$ is injective.
\end{prop}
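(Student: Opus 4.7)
The plan is to prove injectivity by an energy/integration-by-parts argument applied to $f\in D(\Delta_0^g)$ with $(\lambda-\Delta^g)f=0$, combined with a separation into real and imaginary parts of $\lambda$.

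First I would observe that since $\overline{M}$ is compact, a function $f\in\bigcap_{p\geq 1}W^{2,p}(M)$ automatically lies in $H^2(M)=W^{2,2}(M)$. The condition $f\in\mathrm{C}_0(\overline{M})$, i.e.\ $f|_{\partial M}=0$, together with $f\in H^1(M)$, implies $f\in H^1_0(M)$ by the standard characterization of $H^1_0$ via vanishing trace. So any $f\in D(\Delta_0^g)$ satisfying $\Delta^g f\in\mathrm{C}(\overline{M})\subset L^2(M)$ is in $H^1_0(M)\cap H^2(M)$.

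Next I would apply Green's formula on the Riemannian manifold $\overline{M}$: for $f\in H^1_0(M)\cap H^2(M)$,
\begin{equation*}
\int_M (\Delta^g f)\,\overline{f}\,\dvol_g
= -\int_M g(\nabla^g f,\nabla^g\overline{f})\,\dvol_g
+ \int_{\partial M} \overline{f}\,\tfrac{\partial f}{\partial n}\,dS ,
\end{equation*}
and note that the boundary term vanishes because the trace of $f$ is zero. Substituting the hypothesis $\Delta^g f=\lambda f$ yields
\begin{equation*}
\lambda\int_M |f|^2\,\dvol_g \;=\; -\int_M |\nabla^g f|_g^2\,\dvol_g .
\end{equation*}
The right-hand side is a non-positive real number. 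Taking imaginary parts shows $\im(\lambda)\int_M|f|^2=0$, so either $\im(\lambda)=0$ or $f\equiv 0$. If $\im(\lambda)=0$, then by assumption $\lambda\in\R\setminus\R_-=\R_+$, in which case the left-hand side is non-negative while the right-hand side is non-positive; both must vanish, giving first $\nabla^g f=0$, and then, together with $f|_{\partial M}=0$ and connectedness of components meeting the boundary (and the fact that any component not meeting the boundary would force $f$ to be a constant that must also be zero by the Dirichlet condition on the global problem — more carefully, from $\int_M|f|^2=0$ directly), $f=0$. By continuity of $f$ we conclude $f\equiv 0$, proving injectivity of $\lambda-\Delta_0^g$.

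The only mildly delicate point is the justification of Green's formula for functions of the regularity $H^1_0\cap H^2$ on a Riemannian manifold with boundary; this is completely standard (it follows by partition of unity together with the Euclidean divergence theorem applied in charts, using the density of smooth functions vanishing near $\partial M$ in $H^1_0$), so I do not expect a genuine obstacle — the whole argument is a one-line energy estimate once the regularity bookkeeping is in place.
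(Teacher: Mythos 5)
Your proof is correct and takes essentially the same approach as the paper: both apply Green's formula to a solution $f$ of $(\lambda-\Delta^g)f=0$ with vanishing trace to obtain $\lambda\|f\|_{L^2}^2=-\|\nabla^g f\|_{L^2}^2\in\R_-$, and conclude $f=0$ from $\lambda\notin\R_-$. You merely spell out the regularity bookkeeping ($H^1_0\cap H^2$, vanishing boundary term) and the real/imaginary-part case split that the paper leaves implicit.
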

\begin{proof}
Considering the equation
\begin{align}
\begin{cases}
\lambda f = \Delta^g f , \\
f|_{\partial M} = 0 , \label{eq:1}
\end{cases}
\end{align}
for $f \in C(\overline{M})$,
one obtains, that $\lambda - \Delta^g_0$ is injective if the only solution of \eqref{eq:1} is zero.



Since $\overline{M}$ is compact, the domain $D(\Delta^g_0)$ is contained in $L^2(M)$ and $\Delta^g f \in L^2(M)$.
Hence, Green's formula implies
\begin{align*}
\lambda\| f \|_{L^2(M)}^2 = \lambda \int_{M} f \overline{f} \, \dvol_M^g 
= \int_M \Delta^g f \overline{f} \, \dvol_M^g 
= -\int_M g(\nabla^g f, \nabla^g \overline{f}) \, \dvol_M^g \in \R_- .
\end{align*}
Since $\lambda \in \C \setminus \R_-$, the term $\lambda \| f \|_{L^2(M)}^2$ can be in $\R_-$ only if $f = 0$. 

%
%
\end{proof}

In the next step we construct Green's functions such that the associated integral operators approximate the resolvent of $A_0$.

\bigskip

To this end, it is necessary to smooth the distance function $d$ on $\overline{M}$. We consider a sufficiently small $\epsilon > 0$ and define
\begin{align*}
\rho(x,y) := d(x,y) \chi\left(\frac{d(x,y)}{\epsilon}\right) + 2\epsilon \left(1-\chi\left(\frac{d(x,y)}{\epsilon}\right)\right) ,
\end{align*}
where $\chi$ is a smooth cut-off function with $\chi(s) = 1$ if $s < 1$
and $\chi(s) = 0$ if $s > 2$.
Then $\rho \equiv d$ for $d(x,y) < \epsilon$ and $\rho \in C^\infty((\overline{M}\times \overline{M})\setminus \{(x,x) : x \in \overline{M} \} ,\R)$.

\smallskip

Next we extend the smoothed distance function $\rho$ on $\overline{M}$ beyond the 
boundary $\partial M$. To this end, the 
set $S_{2\epsilon} := \{ x \in \overline{M} \colon d(x,\partial M) < 2\epsilon \}$ 
is identified via the normal exponential map with $\partial M \times 
[0, 2\epsilon)$. Considering $\overline{M} \cup (\partial M \times (- 2\epsilon,0])$
and identifying $\partial M$ with $\partial M \times \{0\}$ via 
$x \sim (x,0)$, one obtains a smooth manifold $\widetilde{M}$.
By Whitney's extension theorem (see \cite{See:64}) the metric $g$ can be extended to a smooth metric $\bar{g}$ on $\widetilde{M}$ and
hence the smoothed distance function $\rho$ can be extended to a smooth function $\bar{\rho}$ on $\widetilde{M} \times \widetilde{M} \setminus \{(x,x) : x \in \widetilde{M} \}$.

\smallskip

For $x \in S_{2\epsilon}$ we consider the reflected point $x^\ast \in \widetilde{M} \setminus M$ with
\begin{align*}
\bar{\rho}(x, \partial M) = \bar{\rho}(x^\ast, \partial M)
\end{align*}
such that the nearest neighbour of $x$ on $\partial M$ and the nearest neighbour of $x^\ast$ on $\partial M$ coincide. 

\smallskip

Here and in the following we denote by $n := \dim(\overline{M})$ the dimension of the manifold. 
The kernels are defined by
\begin{align*}
K_\lambda(x,y) 
:= 
\begin{cases}
\frac{\sqrt{\lambda}^{\nicefrac{n}{2}-1}}{\sqrt{2\pi}^n}
\left( \frac{K_{\nicefrac{n}{2}-1}(\sqrt{\lambda} \bar{\rho}(x,y))}{\bar{\rho}(x,y)^{\nicefrac{n}{2}-1}}
- \frac{K_{\nicefrac{n}{2}-1}(\sqrt{\lambda} \bar{\rho}(x^\ast,y))}{\bar{\rho}(x^\ast,y)^{\nicefrac{n}{2}-1}} \right) 
, &\text{if} \quad d(x,\partial M) < \epsilon, \\
\frac{\sqrt{\lambda}^{\nicefrac{n}{2}-1}}{\sqrt{2\pi}^n}
\left( \frac{K_{\nicefrac{n}{2}-1}(\sqrt{\lambda} \bar{\rho}(x,y))}{\bar{\rho}(x,y)^{\nicefrac{n}{2}-1}}
- \chi\left(\frac{\bar{\rho}(x,\partial M)}{\epsilon}\right) \frac{K_{\nicefrac{n}{2}-1}(\sqrt{\lambda} \bar{\rho}(x^\ast,y))}{\bar{\rho}(x^\ast,y)^{\nicefrac{n}{2}-1}} \right) 
, &\text{if} \quad d(x,\partial M) \in [\epsilon,2 \epsilon], \\
\frac{\sqrt{\lambda}^{\nicefrac{n}{2}-1}}{\sqrt{2\pi}^n}
\frac{K_{\nicefrac{n}{2}-1}(\sqrt{\lambda} \bar{\rho}(x,y))}{\bar{\rho}(x,y)^{\nicefrac{n}{2}-1}}, &\text{if} \quad d(x,\partial M) > 2\epsilon,
\end{cases}
\end{align*}
for $x \in \tilde{M}, y \in M$ and $\lambda \in \C \setminus \R_-$, 
where $K_{\nicefrac{n}{2}-1}$ is the modified Bessel function of the second kind (cf. \autoref{App:def}) of order $\nicefrac{n}{2}-1$.
Moreover, the associated integral operators are given by
\begin{align*}
(G_{\lambda}  f)(x) := \int_M K_\lambda(x,y) f(y) \, dy .
\end{align*} 

We now prove that the integral operators $G_\lambda$ satisfy similar estimates as the resolvents of a sectorial operator. 

\begin{prop}\label{Greensfunction estimate}
Let $\eta > 0$.
For $\lambda \in \Sigma_{\pi-\eta}$ with $|\lambda| \geq 1$, 
the integral operators
$G_\lambda$ fulfil 
\begin{align*}
\| G_{\lambda}  f \|_{L^\infty(M)} \leq \frac{C(\eta)}{|\lambda|} \| f \|_{L^\infty(M)}
\end{align*}
for all $f \in \mathrm{C}(\overline{M})$ and $C(\eta) > 0$. 
\end{prop}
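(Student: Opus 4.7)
Since $G_\lambda$ is an integral operator, the estimate
\[
\| G_\lambda f \|_{L^\infty(M)}\leq \|f\|_{L^\infty(M)}\cdot \sup_{x\in \overline{M}}\int_{M}|K_\lambda(x,y)|\,dy
\]
reduces the claim to showing that $\sup_{x}\int_M |K_\lambda(x,y)|\,dy \leq C(\eta)/|\lambda|$ for all $\lambda\in \Sigma_{\pi-\eta}$ with $|\lambda|\geq 1$. By the triangle inequality applied to the definition of $K_\lambda$, this in turn follows from a bound of the same form for each of the two ``Bessel terms'' appearing in the three cases; the reflected term is present only for $d(x,\partial M) \leq 2\epsilon$ and is structurally identical to the main term (with $x$ replaced by $x^{*}$ and integration still over $M$), so it is enough to bound $I(x) := \int_{M}\tfrac{|\sqrt{\lambda}|^{n/2-1}}{(2\pi)^{n/2}}\, \bar{\rho}(x,y)^{-(n/2-1)}\, |K_{n/2-1}(\sqrt{\lambda}\bar{\rho}(x,y))|\,dy$, since the reflected integral is handled verbatim on $\widetilde{M}$ using the extended metric $\bar g$.

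\textbf{Splitting by distance.} I would split $M$ into the three zones $A_1=\{d(x,y)<\epsilon\}$, $A_2=\{\epsilon\leq d(x,y)\leq 2\epsilon\}$, and $A_3=\{d(x,y)>2\epsilon\}$ dictated by the cut-off in $\rho$. On $A_3$ the smoothed distance $\bar\rho(x,y)$ is identically equal to $2\epsilon$, so the integrand is a constant and the contribution to $I(x)$ is bounded by a multiple of $|\sqrt{\lambda}|^{n/2-1}|K_{n/2-1}(2\epsilon\sqrt{\lambda})|$. Using the classical large-argument asymptotic $|K_\nu(z)|\leq C_\eta |z|^{-1/2}e^{-\mathrm{Re}\, z}$, valid uniformly for $|\arg z|\leq \frac{\pi-\eta}{2}$, and the fact that $\mathrm{Re}(\sqrt{\lambda})\geq |\sqrt{\lambda}|\sin(\eta/2)$ on $\Sigma_{\pi-\eta}$, this contribution is $O(e^{-c(\eta)\sqrt{|\lambda|}})$, hence trivially $o(1/|\lambda|)$ for $|\lambda|\geq 1$. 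The zone $A_2$ is handled identically, since $\bar\rho(x,y)$ remains bounded below by a positive constant there.

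\textbf{Local term and change of variables.} On $A_1$, $\bar{\rho}(x,y)=d(x,y)$. I pass to normal (geodesic polar) coordinates centred at $x$, writing $y=\exp_x(r\theta)$ with $r\in[0,\epsilon)$, $\theta\in S^{n-1}$, so that $dy=J(x,r,\theta)r^{n-1}dr\,d\theta$ with $J$ smooth and uniformly bounded on the compact manifold $\overline{M}$. This reduces the contribution of $A_1$ to a one-dimensional integral
\[
I(x)\big|_{A_1}\leq C\,|\sqrt{\lambda}|^{n/2-1}\int_0^{\epsilon} r^{n/2}\, |K_{n/2-1}(\sqrt{\lambda}r)|\,dr.
\]
The substitution $u=|\sqrt{\lambda}|\,r$ then produces a prefactor $|\sqrt{\lambda}|^{n/2-1}\cdot |\sqrt{\lambda}|^{-(n/2+1)}=|\lambda|^{-1}$ and leaves the scale-invariant integral $\int_0^{\infty}u^{n/2}|K_{n/2-1}(e^{i\arg\sqrt{\lambda}}\,u)|\,du$, which is finite and bounded uniformly in $\lambda\in\Sigma_{\pi-\eta}$: at $u=0$ the small-argument asymptotic $|K_{n/2-1}(z)|\lesssim |z|^{-(n/2-1)}$ gives integrand $\sim u$, and at $u\to\infty$ the exponential decay cited above ensures integrability (uniformly as long as $|\arg\sqrt{\lambda}|\leq \frac{\pi-\eta}{2}$).

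\textbf{Main obstacle.} The delicate point is obtaining the Bessel estimates with constants depending only on $\eta$, rather than blowing up as $\arg\lambda\to \pm\pi$; this is precisely where the restriction to $\Sigma_{\pi-\eta}$ enters, since $\sqrt{\lambda}$ then lies in a sector strictly inside the right half-plane and $K_{n/2-1}$ remains well-behaved. A secondary technical point is making sure the volume Jacobian $J(x,r,\theta)$ in normal coordinates is uniformly controlled up to the boundary; this is where compactness of $\overline{M}$ (and of $\widetilde{M}$ when dealing with the reflected term through the extended metric $\bar g$) is used. Combining the estimates on $A_1$, $A_2$, $A_3$, and their analogues for the reflected kernel, yields the claimed bound $C(\eta)/|\lambda|$.
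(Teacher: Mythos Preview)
Your proposal is correct and follows essentially the same approach as the paper. The paper organizes the argument by splitting the $x$-variable into the three regions $M\setminus S_{2\epsilon}$, $S_\epsilon$, $S_{2\epsilon}\setminus S_\epsilon$ dictated by the piecewise definition of $K_\lambda$, and then invokes prepared lemmas (\autoref{Lem. A.1}, \autoref{Lem. A.2}, \autoref{Cor. A.3}) that first replace $|K_{n/2-1}(\sqrt\lambda\,\rho)|$ by $K_{n/2-1}(C(\eta)\sqrt{|\lambda|}\,\rho)$ and then perform the same scaling $u=\sqrt{|\lambda|}\,r$ you carry out; your decomposition by $d(x,y)$ into $A_1,A_2,A_3$ is precisely the content of the paper's \autoref{Lem. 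A.2} (near/far split), and your direct use of complex-argument Bessel asymptotics replaces the paper's two-step reduction via \autoref{Lem. A.1}.
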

\begin{proof}
By \autoref{Lem. A.1} and \autoref{Lem. A.2} we obtain
\begin{align}
\| G_{\lambda}  f \|_{L^\infty(M\setminus S_{2\epsilon})} 
\leq C \sqrt{|\lambda|}^{\nicefrac{n}{2}-1}
\sup_{x \in M \setminus S_{2\epsilon}}
\int_{M}  
\frac{K_{\nicefrac{n}{2}-1}(C(\eta) \sqrt{|\lambda|} {\rho}(x,y) )}{{\rho}(x,y)^{\nicefrac{n}{2}-1}}  \, dy \cdot
\|f \|_{L^\infty(M)} \\
 \leq \frac{C'(\eta)}{|\lambda|} \| f \|_{L^\infty(M)}  \notag
\end{align}
for $f \in \mathrm{C}(\overline{M})$. 
Moreover, \autoref{Lem. A.1}, \autoref{Lem. A.2} and \autoref{Cor. A.3} imply
\begin{align}
\label{eq:2}  
\| G_{\lambda}  f \|_{L^\infty(S_{\epsilon})} 
&\leq C \sqrt{|\lambda|}^{\nicefrac{n}{2}-1} 
\left( 
\sup_{x \in S_{{\epsilon}}}
\int_{M}
\frac{K_{\nicefrac{n}{2}-1}(C(\eta)\sqrt{|\lambda|} \overline{\rho}(x,y) )}{\overline{\rho}(x,y)^{\nicefrac{n}{2}-1}} \, dy \right. 
 \\
&+ \left.
\sup_{x \in S_{\epsilon}} 
\int_{M}
\frac{K_{\nicefrac{n}{2}-1}(C(\eta)\sqrt{|\lambda|} \overline{\rho}(x^\ast,y) )}{\overline{\rho}(x^\ast,y)^{\nicefrac{n}{2}-1}} \, dy
\right) 
\|f \|_{L^\infty(M)} \notag \leq \frac{C'(\eta)}{|\lambda|} \| f \|_{L^\infty(M)} 
\end{align}
for $f \in \mathrm{C}(\overline{M})$. 
Furthermore \autoref{Lem. A.1}, \autoref{Lem. A.2} and \autoref{Cor. A.3} yield
\begin{align}
\label{eq:3}  
&\phantom{\leq}\| G_{\lambda}  f \|_{L^\infty(S_{2\epsilon}\setminus S_{\epsilon})} \\
&\leq C \sqrt{|\lambda|}^{\nicefrac{n}{2}-1} 
\left( 
\sup_{x \in S_{2\epsilon}\setminus S_{\epsilon}}
\int_{M}
\frac{K_{\nicefrac{n}{2}-1}(C(\eta)\sqrt{|\lambda|} \overline{\rho}(x,y) )}{{\rho}(x,y)^{\nicefrac{n}{2}-1}} \, dy \right. 
\notag
\\
&+ \left.
\underbrace{
\sup_{x \in S_{2\epsilon}\setminus S_{\epsilon}} 
\chi
\left(
\frac{\overline{\rho}(x,\partial M)}{\epsilon}
\right)
}_{= 1}
\sup_{x \in S_{2\epsilon}\setminus S_{\epsilon}} 
\int_{M}
\frac{K_{\nicefrac{n}{2}-1}(C(\eta)\sqrt{|\lambda|} \overline{\rho}(x^\ast,y) )}{\overline{\rho}(x^\ast,y)^{\nicefrac{n}{2}-1}} \, dy 
\right) 
\|f \|_{L^\infty(M)} \notag \\
&\leq \frac{C'(\eta)}{|\lambda|} \| f \|_{L^\infty(M)} \notag
\end{align}
for $f \in \mathrm{C}(\overline{M})$.
Summing up it follows that
\begin{align*}
\| G_{\lambda} f \|_{L^\infty(M)} 
= \| G_{\lambda} f \|_{L^\infty(M\setminus S_{2\epsilon})} 
+ \| G_{\lambda} f \|_{L^\infty(S_{\epsilon})}  
+ \| G_{\lambda} f \|_{L^\infty(S_{2\epsilon} \setminus S_{\epsilon})} 
\leq \frac{C(\eta)}{|\lambda|} \| f \|_{L^\infty(M)} 
\end{align*}
for $f \in \mathrm{C}(\overline{M})$ as claimed. 
\end{proof}

To show that the kernel $K_\lambda$ is approximately a Green's function for $\lambda - \Delta^g_0$, we need the following lemmata.

\begin{lem}\label{Lem. 1}
Let $\eta > 0$.
For $\lambda \in \Sigma_{\pi-\eta}$ with $|\lambda| \geq 1$, we have
\begin{align*}
(\lambda &- \Delta^{g}_x) 
\left( \frac{\sqrt{\lambda}^{\nicefrac{n}{2}-1}}{\sqrt{2 \pi}^n} 
\frac{K_{\nicefrac{n}{2}-1}(\sqrt{\lambda} \rho(x,y))}{\rho(x,y)^{\nicefrac{n}{2}-1}} \right) \\
&= \delta_x(y) 
+ \mathcal{O}\left( 
\sqrt{|\lambda|}^{\nicefrac{n}{2}-1}
\left( \frac{\sqrt{|\lambda|}K_{\nicefrac{n}{2}}(C(\eta)\sqrt{|\lambda|}\rho(x,y))}{\rho(x,y)^{\nicefrac{n}{2}-2}} +  \frac{K_{\nicefrac{n}{2}-1}(C(\eta)\sqrt{|\lambda|}\rho(x,y))}{\rho(x,y)^{\nicefrac{n}{2}-1}}
\right) + e^{-C(\eta) \sqrt{|\lambda|}\varepsilon} 
\right) 
\end{align*}
for $x,y \in \overline{M}$. 
\end{lem}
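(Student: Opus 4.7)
The strategy is to work in Riemannian normal coordinates centered at the point $y \in M$. In these coordinates the smoothed distance coincides with the Euclidean one, $\rho(x,y) = d(x,y) = |x|$ whenever $d(x,y) < \epsilon$, and the classical expansions
\[
g_{ij}(x) = \delta_{ij} + O(|x|^2), \qquad g^{ij}(x) = \delta^{ij} + O(|x|^2), \qquad {}^g\Gamma^k_{ij}(x) = O(|x|)
\]
hold. Consequently the Laplace--Beltrami operator splits as $\Delta^g_x = \Delta^{\mathrm{Eucl}}_x + E_x$, with the ``error'' operator $E_x := (g^{ij}-\delta^{ij})\partial_i\partial_j - g^{ij}\,{}^g\Gamma^k_{ij}\partial_k$ carrying at least one power of $|x|$ in each coefficient.

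Abbreviate $F(r) := \tfrac{\sqrt{\lambda}^{n/2-1}}{\sqrt{2\pi}^n}\tfrac{K_{n/2-1}(\sqrt{\lambda}r)}{r^{n/2-1}}$. The classical fact that $F(|x-y|)$ is the fundamental solution of $\lambda - \Delta^{\mathrm{Eucl}}$ on $\mathbb{R}^n$ yields $(\lambda - \Delta^{\mathrm{Eucl}}_x) F(|x|) = \delta_x(y)$. Since $\rho(x,y)=|x|$ on the relevant patch, I would therefore write
\[
(\lambda - \Delta^g_x) F(\rho(x,y)) = \delta_x(y) - E_x F(|x|),
\]
so the task reduces to bounding $|E_x F(|x|)|$ pointwise by the expression appearing on the right-hand side of the lemma.

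For this I would exploit the Bessel identity $\tfrac{d}{dr}[r^{-\nu} K_\nu(ar)] = -a\, r^{-\nu}K_{\nu+1}(ar)$ with $a=\sqrt{\lambda}$, $\nu=n/2-1$, giving $F'(r) = -\tfrac{\sqrt{\lambda}^{n/2}}{\sqrt{2\pi}^n}\tfrac{K_{n/2}(\sqrt{\lambda}r)}{r^{n/2-1}}$, and the radial equation $F''(r) + \tfrac{n-1}{r} F'(r) = \lambda F(r)$ to eliminate $F''$. This produces
\[
|E_x F(|x|)| \lesssim |x|\,|F'(|x|)| + |x|^2\bigl(|F''(|x|)| + |F'(|x|)|/|x|\bigr) \lesssim |x|\,|F'(|x|)| + |\lambda|\,|x|^2\,|F(|x|)|.
\]
The first summand is exactly $\sqrt{|\lambda|}^{n/2-1}\sqrt{|\lambda|}K_{n/2}(\sqrt{|\lambda|}|x|)/|x|^{n/2-2}$, matching the first error term. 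The term $|\lambda|\,|x|^2|F|$ must be absorbed into $\sqrt{|\lambda|}^{n/2-1}K_{n/2-1}(C(\eta)\sqrt{|\lambda|}|x|)/|x|^{n/2-1}$ with a slightly smaller constant. This is achieved using the standard asymptotics $K_\nu(z) \asymp z^{-\nu}$ as $z \downarrow 0$ and $K_\nu(z) \asymp \sqrt{\pi/(2z)}\,e^{-z}$ as $z\to\infty$, which imply $z^k K_\nu(z) \lesssim K_\nu(cz)$ for any $c\in(0,1)$ and $k\geq 0$. The sectorial constant $C(\eta)>0$ arises by writing $\sqrt{\lambda} = \sqrt{|\lambda|}\,e^{i\arg(\lambda)/2}$ with $|\arg(\lambda)|\leq \pi-\eta$, so that $\mathrm{Re}(\sqrt{\lambda}) \geq \sin(\eta/2)\sqrt{|\lambda|}$ and one may pass from the complex argument of $K_\nu$ to its real absolute value at the price of the constant $C(\eta) := \sin(\eta/2)$.

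Finally, in the range $d(x,y) \geq \epsilon$ the smoothed distance $\rho(x,y)$ no longer agrees with $d(x,y)$, but the kernel, together with its first and second derivatives in $x$, is smooth and uniformly controlled; the exponential decay of the Bessel function at argument $\geq C(\eta)\sqrt{|\lambda|}\epsilon$ produces the additive term $e^{-C(\eta)\sqrt{|\lambda|}\epsilon}$. The main obstacle is the Bessel bookkeeping described in the previous paragraph: one must make sure that the multiple sources of polynomial growth in $\sqrt{|\lambda|}|x|$ (from $|\lambda||x|^2$, from the second derivative identity, and from the sector correction) all fit simultaneously under a single choice of $C(\eta)\in(0,1)$.
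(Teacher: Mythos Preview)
Your proposal is correct and follows essentially the same line as the paper: work in geodesic normal coordinates, use that the radial kernel $F$ is the Euclidean fundamental solution of $\lambda-\Delta$, and estimate the discrepancy coming from $g_{ij}=\delta_{ij}+O(|x|^2)$, handling the region $d(x,y)\geq\epsilon$ by exponential decay of $K_\nu$.

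The only organizational difference is in the error calculation. The paper applies the chain rule to the radial composition, using the eikonal identity $|\nabla^g_x\rho|_g=1$ and $\Delta^g_x\rho=(n-1)/\rho+O(\rho)$, so the entire error reduces to $O(\rho\,|K'(\rho)|)$ and no second term appears. Your operator splitting $\Delta^g=\Delta^{\mathrm{Eucl}}+E$ produces the extra term $|\lambda|\,|x|^2|F|$, which you then absorb via $z^2K_\nu(z)\lesssim K_\nu(cz)$; this is a legitimate detour but one the paper's formulation avoids.
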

\begin{proof}
Considering 
\begin{align}
K(r) := \frac{\sqrt{\lambda}^{\nicefrac{n}{2}-1}}{\sqrt{2\pi}^n} \frac{K_{\nicefrac{n}{2}-1}(\sqrt{\lambda} r)}{r^{\nicefrac{n}{2}-1}}
\label{kernel rotational}
\end{align}
one obtains 
\begin{align}
K'(r) &= \frac{\sqrt{\lambda}^{\nicefrac{n}{2}-1}}{\sqrt{2\pi}^n}
\left( \frac{\sqrt{\lambda}K'_{\nicefrac{n}{2}-1}(\sqrt{\lambda}r)}{r^{\nicefrac{n}{2}-1}}
- \frac{(\nicefrac{n}{2}-1)K_{\nicefrac{n}{2}-1}(\sqrt{\lambda}r)}{r^{\nicefrac{n}{2}}}
\right) \label{eq:K'} \\
&= -\frac{\sqrt{\lambda}^{\nicefrac{n}{2}-1}}{\sqrt{2\pi}^n}
\left( \frac{\sqrt{\lambda}K_{\nicefrac{n}{2}}(\sqrt{\lambda}r)}{2r^{\nicefrac{n}{2}-1}}
+  \frac{\sqrt{\lambda}K_{\nicefrac{n}{2}-2}(\sqrt{\lambda}r)}{2r^{\nicefrac{n}{2}-1}}
+ \frac{(\nicefrac{n}{2}-1)K_{\nicefrac{n}{2}-1}(\sqrt{\lambda}r)}{r^{\nicefrac{n}{2}}}
\right) \notag
\end{align}
and hence
\begin{align}
K''(r) = \frac{\sqrt{\lambda}^{\nicefrac{n}{2}-1}}{\sqrt{2\pi}^n}
\left( \frac{\lambda K''_{\nicefrac{n}{2}-1}(\sqrt{\lambda}r)}{r^{\nicefrac{n}{2}-1}} - (n-2) \frac{ \sqrt{\lambda} K'_{\nicefrac{n}{2}-1}(\sqrt{\lambda} r)}{r^{\nicefrac{n}{2}}} +
(\nicefrac{n^2}{4}-\nicefrac{n}{2}) \frac{K_{\nicefrac{n}{2}-1}(\sqrt{\lambda} r)}{r^{\nicefrac{n}{2}+1}}  \right) . \label{eq:K''}
\end{align}
These imply
\begin{align*}
&\phantom{=}K''(r) + \frac{n-1}{r} K'(r) - \lambda K(r) \\
&= \frac{\sqrt{\lambda}^{\nicefrac{n}{2}-1}}{\sqrt{2\pi}^n}
\left( \frac{\lambda K''_{\nicefrac{n}{2}-1}(\sqrt{\lambda}r)}{r^{\nicefrac{n}{2}-1}} + \frac{ \sqrt{\lambda} K'_{\nicefrac{n}{2}-1}(\sqrt{\lambda} r)}{r^{\nicefrac{n}{2}}} +
(\nicefrac{n^2}{4}-\nicefrac{n}{2}) \frac{K_{\nicefrac{n}{2}-1}(\sqrt{\lambda} r)}{r^{\nicefrac{n}{2}+1}} \right. \\
&-  (n-1)(\nicefrac{n}{2}-1) \frac{K_{\nicefrac{n}{2}-1}(\sqrt{\lambda}r)}{r^{\nicefrac{n}{2}+1}} - \left. \lambda \frac{K_{\nicefrac{n}{2}-1}(\sqrt{\lambda} r)}{r^{\nicefrac{n}{2}-1}}
\right) \\
&= \frac{\sqrt{\lambda}^{\nicefrac{n}{2}-1}}{\sqrt{2\pi}^n r^{\nicefrac{n}{2}+1}} \left( 
\lambda r^2 K_{\nicefrac{n}{2}-1}''(\sqrt{\lambda} r) + \sqrt{\lambda} r K_{\nicefrac{n}{2}-1}'(\sqrt{\lambda} r)
- ( (\nicefrac{n}{2}-1)^2 + \lambda r^2 ) K_{\nicefrac{n}{2}-1}(\sqrt{\lambda} r) \right) .
\end{align*}
Remark that the kernel is rotation symmetric and hence the Laplacian is given by
$\Delta_x K(|x|) = \partial_r^2 K(|x|) + \frac{n-1}{r} \partial_r K(|x|)$. Using \eqref{def:Bessel} we conclude
\begin{align}
\Delta_x K(|x|) - \lambda K(|x|) = -C \cdot \delta_0(x) . \label{K Greens Fct}
\end{align}
Next we determining the constant $C$. For sufficient small $R > 0$ one has by Gauss Divergence Theorem
\begin{align*}
C = \int_{B_R(0)} C \cdot \delta_0(x) \, \dvol_{B_R(0)} &= - \int_{B_R(0)} \Delta_x K(|x|) - \lambda K(|x|) \, \dvol_{B_R(0)} \\
&= - \int_{\S^{n-1}_R} \frac{\partial}{\partial n} K(|x|) \, \dvol_{S^{n-1}_R}
+ \lambda \int_{B_R(0)} K(|x|) \, \dvol_{B_R(0)}\\
&= - \int_{\S^{n-1}_R} K'(R) \, \dvol_{S^{n-1}_R}
+ \lambda \int_{B_R(0)} K(|x|) \, \dvol_{B_R(0)} \\
&= - \text{vol}(S^{n-1}) K'(R) R^{n-1}
+ \lambda \int_{B_R(0)} K(|x|) \, \dvol_{B_R(0)} .
\end{align*}
Using \eqref{eq:K'} we obtain
\begin{align*}
C &=
\frac{\sqrt{\lambda}^{\nicefrac{n}{2}-1}}{\sqrt{2\pi}^n} \text{vol}(S^{n-1}) 
\left( \frac{\sqrt{\lambda}K_{\nicefrac{n}{2}}(\sqrt{\lambda}R)}{2R^{\nicefrac{n}{2}-1}}
+  \frac{\sqrt{\lambda}K_{\nicefrac{n}{2}-2}(\sqrt{\lambda}R)}{2R^{\nicefrac{n}{2}-1}}
+ \frac{(\nicefrac{n}{2}-1)K_{\nicefrac{n}{2}-1}(\sqrt{\lambda}R)}{R^{\nicefrac{n}{2}}}
\right)
 R^{n-1} \\
&+ \lambda \int_{B_R(0)} K(|x|) \, \dvol_{B_R(0)} .
\end{align*}
Since $K_\alpha (r) = \mathcal{O}(r^{-\alpha})$ for small $r \in \R_+$, the second and the fourth term vanishes by taking the limit $R \to 0$. 
Since 
\begin{equation*}
\lim_{r \to 0} r^\alpha K_\alpha(r) = 2^{\alpha-1} \Gamma(\alpha) 
\end{equation*}
and 
\begin{align*}
\text{vol}(\S^{n-1}) = n \cdot \frac{\sqrt{\pi}^n}{\Gamma(\nicefrac{n}{2}+1)}
\end{align*}
the limit of the first term is given by
\begin{align*}
\frac{\sqrt{\lambda}^{\nicefrac{n}{2}}}{2^{\nicefrac{n}{2}+1} \sqrt{\pi}^n } \cdot \text{vol}(\S^{n-1}) \cdot \lim_{R \to 0} K_{\nicefrac{n}{2}}(\sqrt{\lambda}R)\cdot R^{\nicefrac{n}{2}} 
&= 
\frac{\sqrt{\lambda}^{\nicefrac{n}{2}}}{2^{\nicefrac{n}{2}+1}} \cdot \frac{n}{\Gamma(\nicefrac{n}{2}+1)} \lim_{R \to 0} K_{\nicefrac{n}{2}}(\sqrt{\lambda}R)\cdot R^{\nicefrac{n}{2}} \\
&= \frac{1}{2^{\nicefrac{n}{2}+1}} \cdot \frac{n}{\Gamma(\nicefrac{n}{2}+1)} \lim_{R' \to 0} K_{\nicefrac{n}{2}}(R')\cdot (R')^{\nicefrac{n}{2}}
\\
&= \frac{1}{2^{\nicefrac{n}{2}+1}} \cdot \frac{n}{\Gamma(\nicefrac{n}{2}+1)} \cdot
2^{\nicefrac{n}{2}-1} \Gamma(\nicefrac{n}{2}) \\
&= \frac{1}{2^{\nicefrac{n}{2}+1}} \cdot \frac{2}{\Gamma(\nicefrac{n}{2})} \cdot
2^{\nicefrac{n}{2}-1} \Gamma(\nicefrac{n}{2})
= \frac{1}{2},
\end{align*}
where we used in the last line, that the Gamma function satisfies $\Gamma(x+1) = x \Gamma(x)$. 

Similar the limit of the third term is 
\begin{align*}
&\phantom{=}(\nicefrac{n}{2}-1) \cdot \frac{\sqrt{\lambda}^{\nicefrac{n}{2}-1}}{\sqrt{2\pi}^n } \cdot \text{vol}(\S^{n-1}) \cdot \lim_{R \to 0} K_{\nicefrac{n}{2}-1}(\sqrt{\lambda}R) \cdot R^{\nicefrac{n}{2}-1} \\
&= (\nicefrac{n}{2}-1) \cdot \frac{\sqrt{\lambda}^{\nicefrac{n}{2}-1}}{\sqrt{2}^n } \cdot \frac{n}{\Gamma(\nicefrac{n}{2}+1)} \cdot \lim_{R \to 0} K_{\nicefrac{n}{2}-1}(\sqrt{\lambda}R) \cdot R^{\nicefrac{n}{2}-1} \\
&= (\nicefrac{n}{2}-1) \cdot \frac{1}{\sqrt{2}^n} \cdot \frac{n}{\Gamma(\nicefrac{n}{2}+1)} \cdot \lim_{R' \to 0} K_{\nicefrac{n}{2}-1}(R') \cdot (R')^{\nicefrac{n}{2}-1} \\
&=  (\nicefrac{n}{2}-1) \cdot \frac{1}{\sqrt{2}^n} \cdot \frac{n}{\Gamma(\nicefrac{n}{2}+1)} \cdot 2^{\nicefrac{n}{2}-2} \Gamma(\nicefrac{n}{2}-1) \\
&= (\nicefrac{n}{2}-1) \cdot \frac{1}{\sqrt{2}^n} \cdot \frac{n}{\nicefrac{n}{2} \cdot (\nicefrac{n}{2}-1) \cdot \Gamma(\nicefrac{n}{2}-1)} \cdot 2^{\nicefrac{n}{2}-2} \Gamma(\nicefrac{n}{2}-1) = \frac{1}{2} .
\end{align*}
Hence $C = \nicefrac{1}{2} + \nicefrac{1}{2} = 1$. 
Moreover we have
\begin{align*}
(K \circ \rho) (x,y) = \frac{\sqrt{\lambda}^{\nicefrac{n}{2}-1}}{\sqrt{2\pi}^n} \frac{K_{\nicefrac{n}{2}-1}(\sqrt{\lambda} \rho(x,y))}{\rho(x,y)^{\nicefrac{n}{2}-1}} 
\end{align*}
for $x, y \in \overline{M}$.
Using geodetic normal coordinates the metric is given by
\begin{align*}
g_{ij}(x) = \delta_{ij} + \mathcal{O}(\rho(x,y)^2).
\end{align*}
From $\Delta^\delta_x |x - y|^2 = 2n$, it follows
\begin{align*}
\Delta^g_x (\rho(x,y)^2) = 2n + \mathcal{O}(\rho(x,y)^2) .
\end{align*}
Using 
\begin{align}
\Delta^g_x (\rho(x,y)^2)
= 2 | \nabla^g_x \rho(x,y) |_g^2 + 2 \rho(x,y) \Delta^g_x \rho(x,y) \label{Laplace rho}
\end{align}
First we consider $y \in B_\varepsilon(x)$. Since $|\nabla^g_x \rho(x,y) |_g = 1$
one obtains 
\begin{align*}
\Delta^g_x (\rho(x,y)) 
= \frac{n-1}{\rho(x,y)} + \mathcal{O}(\rho(x,y)) .
\end{align*}
Therefore, we obtain
\begin{align*}
\Delta^g_x (K \circ \rho)(x,y)
&= K''(\rho(x,y)) |\nabla^g_x \rho(x,y) |_g^2 +
K'(\rho(x,y)) \Delta^g_x \rho(x,y) \\
&= K''(\rho(x,y)) +
K'(\rho(x,y)) \Delta^g_x \rho(x,y) \\
&= K''(\rho(x,y)) + \frac{n-1}{\rho(x,y)} K'(\rho(x,y)) + \mathcal{O}\left(\rho(x,y) |K'(\rho(x,y))| \right) .
\end{align*}
Using \eqref{K Greens Fct} and \autoref{Lem. A.1}, it follows that
\begin{align*}
(\lambda-\Delta^g_x) (K \circ \rho)(x,y)
&= \delta_x (y) + \mathcal{O}\left(\rho(x,y) |K'(\rho(x,y))| \right) \\
&= \delta_x (y) + \mathcal{O}\left( 
\sqrt{|\lambda|}^{\nicefrac{n}{2}-1}
\left( \frac{\sqrt{|\lambda|}K_{\nicefrac{n}{2}}(C(\eta)\sqrt{|\lambda|}\rho(x,y))}{\rho(x,y)^{\nicefrac{n}{2}-2}} \right. \right. \\
&\phantom{= \delta_x(y) }\ + \left. \left.  \frac{(\nicefrac{n}{2}-1) \cdot K_{\nicefrac{n}{2}-1}(C(\eta)\sqrt{|\lambda|}\rho(x,y))}{\rho(x,y)^{\nicefrac{n}{2}-1}}
\right) \right) . 
\end{align*}
Now let $y \in B_{2\varepsilon}(x)\setminus B_\varepsilon(x)$. Since $\rho$ is smooth on $\overline{M}\setminus B_{\varepsilon}(x)$, we have
$|\nabla^g_x \rho(x,y)|_g^2 \leq C$ and therefore by \eqref{Laplace rho}
\begin{align*}
|\Delta^g_x (K \circ \rho)(x,y)| \leq C |K''(\rho(x,y))| + C(n) \frac{|K'(\rho(x,y))|}{\rho(x,y)} + \mathcal{O}(\rho(x,y)|K'(\rho(x,y))|) .
\end{align*}
Moreover one obtains by \autoref{Lem. A.1}
\begin{align*}
|K_{\nicefrac{n}{2}-1}(\sqrt{\lambda} \rho(x,y))| &\leq K_{\nicefrac{n}{2}-1}(C(\eta) \sqrt{|\lambda|} \varepsilon) = \mathcal{O}(e^{-C(\eta) \sqrt{|\lambda|} \varepsilon})\\
|K'_{\nicefrac{n}{2}-1}(\sqrt{\lambda} \rho(x,y))| &\leq  |K_{\nicefrac{n}{2}}(\sqrt{\lambda} \rho(x,y))| + |K_{\nicefrac{n}{2}}(\sqrt{\lambda} \rho(x,y))| \\
&\leq |K_{\nicefrac{n}{2}}(C(\eta)\sqrt{|\lambda|} \varepsilon)| + |K_{\nicefrac{n}{2}}(C(\eta)\sqrt{|\lambda|} \varepsilon)|
= \mathcal{O}(e^{-C(\eta)\sqrt{|\lambda|} \varepsilon})\\
|K'_{\nicefrac{n}{2}-1}(\sqrt{\lambda} \rho(x,y))| &\leq 
|K_{\nicefrac{n}{2}+1}(\sqrt{\lambda} \rho(x,y))| +
|K_{\nicefrac{n}{2}-1}(\sqrt{\lambda} \rho(x,y))| + |K_{\nicefrac{n}{2}-3}(\sqrt{\lambda} \rho(x,y))| \\
&= |K_{\nicefrac{n}{2}+1}(C(\eta)\sqrt{|\lambda|} \varepsilon)| +
|K_{\nicefrac{n}{2}-1}(C(\eta)\sqrt{|\lambda|} \varepsilon)| + |K_{\nicefrac{n}{2}-3}(C(\eta)\sqrt{|\lambda|} \varepsilon)| \\
&= \mathcal{O}(e^{-C(\eta) \sqrt{|\lambda|} \varepsilon})
\end{align*}
for $|\lambda|$and $\lambda \in \Sigma_{\nicefrac{\pi}{2}-\eta}$. Since $\rho(x,y) \geq \varepsilon$, it follows
\begin{align*}
(\lambda - \Delta^g_x) (K \circ \rho)(x,y) = \mathcal{O}(e^{-C(\eta) \sqrt{|\lambda|} \varepsilon})
\end{align*}
for $|\lambda| \geq 1$.

Finally, we consider $y \in \overline{M} \setminus B_{2\varepsilon}(x)$. Since $\rho$ is constant on $\overline{M} \setminus B_{2\varepsilon}(x)$, it follows that $\Delta_x^g (K \circ \rho) = 0$ and therefore as before
\begin{align*}
(\lambda - \Delta^g_x) (K \circ \rho)(x,y) = \mathcal{O}(e^{-C(\eta) \sqrt{|\lambda|} \varepsilon})
\end{align*}
for $|\lambda| \geq 1$.
\end{proof} 

\begin{lem}\label{Lem. 2}
Let $\eta > 0$.
For $\lambda \in \Sigma_{\pi-\eta}$ with $|\lambda| \geq 1$, we have
\begin{align*}
&\phantom{=}(\lambda - \Delta^{g}_x) 
\left( \frac{\sqrt{\lambda}^{\nicefrac{n}{2}-1}}{\sqrt{2 \pi}^n} 
\frac{K_{\nicefrac{n}{2}-1}(\sqrt{\lambda} \overline{\rho}(x^\ast,y))}{\overline{\rho}(x^\ast,y)^{\nicefrac{n}{2}-1}} \right) 
\\
&= \delta_{x^\ast}(y) + \mathcal{O} 
\left(
\sqrt{|\lambda|}^{\nicefrac{n}{2}-1} \frac{
K_{\nicefrac{n}{2}-1}(C(\eta)\sqrt{|\lambda|}\overline{\rho}(x^\ast,y))}{{\overline{\rho}(x^\ast,y)^{\nicefrac{n}{2}}}} \right. \notag \\
&+ \sqrt{|\lambda|}^{\nicefrac{n}{2}} \frac{
K_{\nicefrac{n}{2}}(C(\eta)\sqrt{|\lambda|}\overline{\rho}(x^\ast,y))}{{\overline{\rho}(x^\ast,y)^{\nicefrac{n}{2}-1}}}  \notag \\
&+ 
d(x,\partial M) \left(
\sqrt{|\lambda|}^{\nicefrac{n}{2}-1} \frac{
K_{\nicefrac{n}{2}-1}(C(\eta)\sqrt{|\lambda|}\overline{\rho}(x^\ast,y))}{{\overline{\rho}(x^\ast,y)^{\nicefrac{n}{2}+1}}} \right. \notag \\
&+ \sqrt{|\lambda|}^{\nicefrac{n}{2}} \frac{
K_{\nicefrac{n}{2}}(C(\eta)\sqrt{|\lambda|}\overline{\rho}(x^\ast,y))}{{\overline{\rho}(x^\ast,y)^{\nicefrac{n}{2}}}} \notag \\
&+ \left. \left. \sqrt{|\lambda|}^{\nicefrac{n}{2}+1} \frac{
K_{\nicefrac{n}{2}+1}(C(\eta)\sqrt{|\lambda|}\overline{\rho}(x^\ast,y))}{{\overline{\rho}(x^\ast,y)^{\nicefrac{n}{2}-1}}}
\right) \notag
%
+ e^{-C \sqrt{|\lambda|}\varepsilon}
\right)
\end{align*}
for $x \in S_{2\epsilon}$ and $y \in \overline{M}$. 
\end{lem}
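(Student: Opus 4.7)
The plan is to mimic the proof of \autoref{Lem. 1}, but with the base point replaced by the reflected point $x^\ast(x)$, and to carefully account for the fact that the reflection map $x\mapsto x^\ast$ fails to be a $(g,\bar{g})$-local isometry off $\partial M$. First I would apply the chain rule. With $K(r)$ as in \eqref{kernel rotational} and writing $\bar{\rho} = \bar{\rho}(x^\ast(x),y)$ for short,
\[
\Delta^g_x\bigl[(K\circ\bar{\rho})(x^\ast(x),y)\bigr]
= K''(\bar{\rho})\,\bigl|\nabla^g_x \bar{\rho}(x^\ast(x),y)\bigr|_g^2 + K'(\bar{\rho})\,\Delta^g_x\bigl[\bar{\rho}(x^\ast(x),y)\bigr].
\]
On the other hand, treating $x^\ast$ as an \emph{independent} point in $(\widetilde{M},\bar{g})$, the computation of the proof of \autoref{Lem. 1} goes through verbatim and yields
\[
(\lambda - \Delta^{\bar{g}}_{z})(K\circ\bar{\rho})(z,y)\Big|_{z=x^\ast}
= \delta_{x^\ast}(y) + \mathcal{O}\bigl(\bar{\rho}\,|K'(\bar{\rho})|\bigr) + \mathcal{O}\bigl(e^{-C(\eta)\sqrt{|\lambda|}\varepsilon}\bigr).
\]

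The second and central step is to control the defect between the actual operator $\Delta^g_x$ acting through the reflection and this intrinsic operator. In Fermi coordinates $(y',t)$ near $\partial M$ the reflection reads $(y',t)\mapsto (y',-t)$, and since the Whitney extension satisfies $\bar{g}|_M = g$, a Taylor expansion in $t=d(x,\partial M)$ gives
\[
\bigl|\nabla^g_x\bar{\rho}(x^\ast(x),y)\bigr|_g^2 = \bigl|\nabla^{\bar{g}}_{x^\ast}\bar{\rho}(x^\ast,y)\bigr|_{\bar{g}}^2 + \mathcal{O}\bigl(d(x,\partial M)\bigr),
\]
\[
\Delta^g_x\bigl[\bar{\rho}(x^\ast(x),y)\bigr] = \Delta^{\bar{g}}_{x^\ast}\bar{\rho}(x^\ast,y) + \mathcal{O}(1) + \mathcal{O}\bigl(d(x,\partial M)\bigr)\cdot\bigl(1+|\nabla^{\bar{g}}_{x^\ast}\bar{\rho}|_{\bar{g}}\bigr),
\]
uniformly in $x,y,\lambda$, where the $\mathcal{O}(1)$ term reflects the second fundamental form of $\partial M$. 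Substituting into the chain-rule expansion, the $\mathcal{O}(1)$-defect of $\Delta^g_x\bar{\rho}$ contributes $|K'(\bar{\rho})|$-sized terms, whereas the $\mathcal{O}(d(x,\partial M))$-defects of both $|\nabla^g_x\bar{\rho}|^2$ and $\Delta^g_x\bar{\rho}$ contribute terms of sizes $d(x,\partial M)\cdot|K''(\bar{\rho})|$, $d(x,\partial M)\cdot|K'(\bar{\rho})|/\bar{\rho}$, and $d(x,\partial M)\cdot|K'(\bar{\rho})|$. Expanding $K'$ and $K''$ via \eqref{eq:K'}--\eqref{eq:K''} and reducing Bessel orders by the recursion in \autoref{Lem. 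A.1} produces precisely the two $d$-independent summands with $K_{\nicefrac{n}{2}-1}/\bar{\rho}^{\nicefrac{n}{2}}$ and $K_{\nicefrac{n}{2}}/\bar{\rho}^{\nicefrac{n}{2}-1}$, and the three $d(x,\partial M)$-weighted summands with Bessel indices $\nicefrac{n}{2}-1$, $\nicefrac{n}{2}$, $\nicefrac{n}{2}+1$ as stated.

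Finally, once $\bar{\rho}(x^\ast,y)\geq\varepsilon$ (in particular outside a shrinking neighbourhood of the reflected diagonal), the bound $|K_\alpha(\sqrt{\lambda}r)|\lesssim e^{-C(\eta)\sqrt{|\lambda|}\varepsilon}$ from \autoref{Lem. A.1} absorbs every remaining contribution into the $e^{-C\sqrt{|\lambda|}\varepsilon}$ term, exactly as in the last two cases of the proof of \autoref{Lem. 1}. The hard part is isolating, from the chain-rule defect, the correct two orders of smallness at the boundary: an $\mathcal{O}(1)$ contribution from the second fundamental form of $\partial M$ that is responsible for the $d$-independent Bessel-index shift, and an $\mathcal{O}(d(x,\partial M))$ contribution from the failure of $x\mapsto x^\ast$ to be a $\bar{g}$-isometry on a neighbourhood of $\partial M$. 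Getting this split wrong would either weaken the estimate enough to kill the resolvent bound \autoref{Greensfunction estimate} or overstate the regularity of the reflected kernel; everything after this geometric expansion is a mechanical Bessel computation.
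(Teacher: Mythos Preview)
Your proposal is correct and follows essentially the same route as the paper. The paper packages the computation via the pullback-metric identity $\Delta^g(h\circ\sigma)=(\Delta^{\sigma^\ast g}h)\circ\sigma$ for the reflection $\sigma\colon x\mapsto x^\ast$, and then compares $\Delta^{\sigma^\ast g}$ with $\Delta^g$ using $\sigma^\ast g=g+\mathcal{O}(d(x,\partial M))$ and $\nabla(\sigma^\ast g)=\nabla g+\mathcal{O}(1)$; this is exactly your Fermi-coordinate Taylor expansion rephrased invariantly, and the split into $\mathcal{O}(1)$ first-order (second fundamental form) contributions versus $\mathcal{O}(d(x,\partial M))$ second-order contributions is identical in both arguments.
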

\begin{proof}
Considering the reflection $\sigma \colon S_{2\epsilon} \rightarrow \tilde{M}
\colon x \mapsto x^\ast$ and taking a point on the boundary $p \in \partial M$ every normal vector is an eigenvector for the
eigenvalue $-1$ for the differential $D\sigma_p: T_p M \to T_p M$ and all tangential vectors on $\partial M$ eigenvectors with eigenvalue $1$. In particular $D\sigma_p$ is a linear local isometry, i.e. $\sigma^\ast g = g$ for $p \in \partial M$. Since $\sigma^\ast g - g$ is smooth, we conclude that
\begin{align*}
\sigma^\ast g = g + \mathcal{O}(d(x,\partial M)).
\end{align*}
Hence, one obtains
$\nabla_x^\delta \sigma^\ast g = \nabla_x^\delta g + \mathcal{O}(1)$ and
\begin{align*}
\Delta^g \tilde{h} = \Delta^g (h \circ \sigma) = (\Delta^{\sigma^\ast g} h) \circ \sigma
\end{align*}
for $\tilde{h} (x) := h(x^\ast)$. Therefore, 
\begin{align}
(\lambda - \Delta^g) \tilde{h} = ((\lambda - \Delta^{\sigma^\ast g})h) \circ \sigma
+ (\Delta^{\sigma^\ast g} h - \Delta^g h) \circ \sigma . \label{eq:Spiegelung}
\end{align}
Using 
\begin{equation*}
\Delta^g f = g^{ij} (\partial_{ij}^2 f- \Gamma^k_{ij} \partial_k f)
\end{equation*}
we obtain
\begin{equation*}
| \Delta^g f - \Delta^{\sigma^\ast g} f|(x) \leq C \cdot | g - \sigma^\ast g|_g (x)
\cdot \sum_{i,j = 1}^n |\partial_{ij}^2 f |(x)+ C \cdot \left| \nabla g - \nabla (\sigma^\ast g)\right|_g (x) \cdot | \nabla f |_g(x) .
\end{equation*}
Since $| g - \sigma^\ast g|_g (x) = \mathcal{O}(d(x,\partial M))$ and $\left| \nabla g - \nabla (\sigma^\ast g)\right|_g(x) = \mathcal{O}(1)$ we consider the derivatives of the kernel. 
Define $K$ as in \eqref{kernel rotational} we obtain
\begin{align*}
\partial_i (K \circ \overline{\rho})(x^\ast,y) &= K'(\overline{\rho}(x^\ast,y)) \cdot \partial_i \overline{\rho}(x^\ast,y) \\
\partial_{ij}^2 (K \circ \overline{\rho})(x^\ast,y) &= K''(\overline{\rho}(x^\ast,y)) \cdot \partial_i \overline{\rho}(x^\ast,y) \cdot \partial_j \overline{\rho}(x^\ast,y) + K'(\overline{\rho}(x^\ast,y)) \cdot \partial_{ij}^2 \overline{\rho}(x^\ast,y) .
\end{align*}
Since $\partial_i \overline{\rho}(x^\ast,y) = \mathcal{O}(1)$ and \eqref{eq:K'} we obtain
\begin{equation*}
\nabla_x (K \circ \overline{\rho})(x^\ast,y) = \mathcal{O}\left( \sqrt{|\lambda|}^{\nicefrac{n}{2}-1} \frac{
K_{\nicefrac{n}{2}-1}(C(\eta)\sqrt{|\lambda|}\overline{\rho}(x^\ast,y))}{{\overline{\rho}(x^\ast,y)^{\nicefrac{n}{2}}}}
+ \sqrt{|\lambda|}^{\nicefrac{n}{2}} \frac{
K_{\nicefrac{n}{2}}(C(\eta)\sqrt{|\lambda|}\overline{\rho}(x^\ast,y))}{{\overline{\rho}(x^\ast,y)^{\nicefrac{n}{2}-1}}}
\right) ,
\end{equation*}
where we used that by \autoref{Lem. A.1} and the monotonicity of Bessel functions
\begin{equation*}
|K_{\nicefrac{n}{2}-2} (\sqrt{\lambda} \overline{\rho}(x^\ast,y))| \leq K_{\nicefrac{n}{2}-2} (C(\eta)\sqrt{|\lambda|} \overline{\rho}(x^\ast,y))
\leq K_{\nicefrac{n}{2}} (C(\eta)\sqrt{|\lambda|} \overline{\rho}(x^\ast,y))
\end{equation*}
holds.
Similar we obtain from \eqref{eq:K''} 
\begin{align*}
K''(\overline{\rho}(x^\ast,y)) &= \mathcal{O}\left(
\sqrt{|\lambda|}^{\nicefrac{n}{2}-1} \frac{
K_{\nicefrac{n}{2}-1}(C(\eta)\sqrt{|\lambda|}\overline{\rho}(x^\ast,y))}{{\overline{\rho}(x^\ast,y)^{\nicefrac{n}{2}+1}}}
+ \sqrt{|\lambda|}^{\nicefrac{n}{2}} \frac{
K_{\nicefrac{n}{2}}(C(\eta)\sqrt{|\lambda|}\overline{\rho}(x^\ast,y))}{{\overline{\rho}(x^\ast,y)^{\nicefrac{n}{2}}}} \right. \\
&\qquad \qquad + \left. \sqrt{|\lambda|}^{\nicefrac{n}{2}+1} \frac{
K_{\nicefrac{n}{2}+1}(C(\eta)\sqrt{|\lambda|}\overline{\rho}(x^\ast,y))}{{\overline{\rho}(x^\ast,y)^{\nicefrac{n}{2}-1}}}
\right) 
\end{align*} 
Using
\begin{equation*}
\partial_{ij}^2 \overline{\rho}(x^\ast,y)^2 = 2 \partial_i \overline{\rho}(x^\ast,y) \partial_j \overline{\rho}(x^\ast,y) + 
2\overline{\rho}(x^\ast,y) \partial_{ij}^2 \overline{\rho}(x^\ast,y)
\end{equation*}
and $\partial_i \overline{\rho}(x^\ast,y) = \mathcal{O}(1)$ and $\partial_{ij}^2 \overline{\rho}(x^\ast,y)^2 = \mathcal{O}(1)$ one has
\begin{equation*}
\partial_{ij}^2 \overline{\rho}(x^\ast,y) = \mathcal{O}\left( \frac{1}{\overline{\rho}(x^\ast,y)} \right) .
\end{equation*}
Hence
\begin{align*}
\partial_{ij}^2 (K \circ \overline{\rho})(x^\ast,y) 
&= \mathcal{O}\left(
\sqrt{|\lambda|}^{\nicefrac{n}{2}-1} \frac{
K_{\nicefrac{n}{2}-1}(C(\eta)\sqrt{|\lambda|}\overline{\rho}(x^\ast,y))}{{\overline{\rho}(x^\ast,y)^{\nicefrac{n}{2}+1}}}
+ \sqrt{|\lambda|}^{\nicefrac{n}{2}} \frac{
K_{\nicefrac{n}{2}}(C(\eta)\sqrt{|\lambda|}\overline{\rho}(x^\ast,y))}{{\overline{\rho}(x^\ast,y)^{\nicefrac{n}{2}}}} \right. \\
&\qquad \qquad + \left. \sqrt{|\lambda|}^{\nicefrac{n}{2}+1} \frac{
K_{\nicefrac{n}{2}+1}(C(\eta)\sqrt{|\lambda|}\overline{\rho}(x^\ast,y))}{{\overline{\rho}(x^\ast,y)^{\nicefrac{n}{2}-1}}}
\right) 
\end{align*}
Finally, we conclude
\begin{align}
&\phantom{=}\left((\Delta^{\sigma^\ast g}_x - \Delta^g_x) \left( 
\frac{\sqrt{\lambda}^{\nicefrac{n}{2}-1}}{\sqrt{2 \pi}^n} 
\frac{K_{\nicefrac{n}{2}-1}(\sqrt{\lambda} \overline{\rho}(\cdot,y))}{\overline{\rho}(\cdot,y)^{\nicefrac{n}{2}-1}}
\right)\right) (x^\ast) \notag \\
&= \mathcal{O} \left(
\sqrt{|\lambda|}^{\nicefrac{n}{2}-1} \frac{
K_{\nicefrac{n}{2}-1}(C(\eta)\sqrt{|\lambda|}\overline{\rho}(x^\ast,y))}{{\overline{\rho}(x^\ast,y)^{\nicefrac{n}{2}}}} \right. \notag \\
&+ \sqrt{|\lambda|}^{\nicefrac{n}{2}} \frac{
K_{\nicefrac{n}{2}}(C(\eta)\sqrt{|\lambda|}\overline{\rho}(x^\ast,y))}{{\overline{\rho}(x^\ast,y)^{\nicefrac{n}{2}-1}}}  \label{eq:Spiegelung-Error} \\
&+ 
d(x,\partial M) \left(
\sqrt{|\lambda|}^{\nicefrac{n}{2}-1} \frac{
K_{\nicefrac{n}{2}-1}(C(\eta)\sqrt{|\lambda|}\overline{\rho}(x^\ast,y))}{{\overline{\rho}(x^\ast,y)^{\nicefrac{n}{2}+1}}} \right. \notag \\
&+ \sqrt{|\lambda|}^{\nicefrac{n}{2}} \frac{
K_{\nicefrac{n}{2}}(C(\eta)\sqrt{|\lambda|}\overline{\rho}(x^\ast,y))}{{\overline{\rho}(x^\ast,y)^{\nicefrac{n}{2}}}} \notag \\
&+ \left. \left. \sqrt{|\lambda|}^{\nicefrac{n}{2}+1} \frac{
K_{\nicefrac{n}{2}+1}(C(\eta)\sqrt{|\lambda|}\overline{\rho}(x^\ast,y))}{{\overline{\rho}(x^\ast,y)^{\nicefrac{n}{2}-1}}}
\right) \right)  . \notag
\end{align}
Now the claim follows by \autoref{Lem. 1} (for $\sigma^\ast g$ instead of $g$), using \eqref{eq:Spiegelung} and \eqref{eq:Spiegelung-Error}.
\end{proof}

\begin{lem}\label{Lem. 3}
Let $\eta > 0$.
We obtain
\begin{align*}
&\phantom{=}(\lambda - \Delta^g_x) \left( 
\frac{\sqrt{\lambda}^{\nicefrac{n}{2}-1}}{\sqrt{2\pi}^n}
\chi\left( 
\frac{\rho(x,\partial M)}{\epsilon} \right)
\frac{K_{\nicefrac{n}{2}-1}(\sqrt{\lambda} \overline{\rho}(x^\ast,y))}{\overline{\rho}(x^\ast,y)^{\nicefrac{n}{2}-1}}
\right) 
= \mathcal{O}\left(
e^{-C(\eta) \sqrt{|\lambda|}\epsilon}\right) 
\end{align*}
for $y \in \overline{M}$, $x \in S_{2\epsilon} \setminus S_{\epsilon}$ and
for $\lambda \in \Sigma_{\pi-\eta}$ with $|\lambda| \geq 1$, 
\end{lem}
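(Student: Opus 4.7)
The plan is to expand the operator via the Leibniz rule applied to the product of the cut-off $\chi(\rho(x,\partial M)/\epsilon)$ and the kernel $(K\circ\overline{\rho})(x^\ast,y)$, where $K$ is as in \eqref{kernel rotational}. Writing $\phi_\epsilon(x):=\chi(\rho(x,\partial M)/\epsilon)$ for brevity, the product rule gives
\begin{align*}
(\lambda-\Delta^g_x)\bigl(\phi_\epsilon\cdot(K\circ\overline{\rho})(\cdot^\ast,y)\bigr)
&= \phi_\epsilon\cdot(\lambda-\Delta^g_x)(K\circ\overline{\rho})(\cdot^\ast,y)\\
&\quad - 2\,g\bigl(\nabla^g\phi_\epsilon,\nabla^g(K\circ\overline{\rho})(\cdot^\ast,y)\bigr) - (\Delta^g\phi_\epsilon)\cdot(K\circ\overline{\rho})(\cdot^\ast,y),
\end{align*}
and the whole task reduces to showing that each of these three summands is $\mathcal{O}(e^{-C(\eta)\sqrt{|\lambda|}\epsilon})$ uniformly in $x\in S_{2\epsilon}\setminus S_\epsilon$ and $y\in\overline{M}$.

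The \emph{key geometric observation} is that for $x\in S_{2\epsilon}\setminus S_\epsilon$ one has $d(x,\partial M)\ge\epsilon$, and because $x^\ast$ lies in the collar $\partial M\times(-2\epsilon,0]$ while $y\in\overline{M}$, every continuous path from $x^\ast$ to $y$ crosses $\partial M$. Hence
\begin{equation*}
\overline{\rho}(x^\ast,y)\;\ge\;C_0\,\epsilon
\end{equation*}
for some geometric constant $C_0>0$ depending only on the extended metric $\overline{g}$. This is the single nontrivial ingredient; the rest is substitution.

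For the first summand I apply \autoref{Lem. 2}. Since $y\in\overline{M}$ and $x^\ast\notin\overline{M}$, the distribution $\delta_{x^\ast}(y)$ vanishes identically on $\overline{M}$, so only the $\mathcal{O}(\cdot)$ remainder survives. Because $\overline{\rho}(x^\ast,y)\ge C_0\epsilon$, each Bessel expression appearing in that remainder has argument $\sqrt{\lambda}\,\overline{\rho}(x^\ast,y)$ bounded below by $C(\eta)\sqrt{|\lambda|}\epsilon$, and \autoref{Lem. A.1} converts this to the desired exponential bound $e^{-C(\eta)\sqrt{|\lambda|}\epsilon}$ (the factor $d(x,\partial M)\le 2\epsilon$ and the powers $\overline{\rho}(x^\ast,y)^{-k}$ get absorbed into the $\mathcal{O}$-constant since $\epsilon$ is fixed).

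For the remaining two summands: $\nabla^g\phi_\epsilon$ and $\Delta^g\phi_\epsilon$ are smooth, bounded by constants depending only on $\epsilon$ and $\chi$, and supported precisely in the annular shell $S_{2\epsilon}\setminus S_\epsilon$. Using the formulas \eqref{eq:K'} and the uniform lower bound $\overline{\rho}(x^\ast,y)\ge C_0\epsilon$, the kernel $(K\circ\overline{\rho})(x^\ast,y)$ and its gradient in $x$ are each dominated by Bessel terms whose argument is bounded below by $C(\eta)\sqrt{|\lambda|}\epsilon$, so \autoref{Lem. A.1} again yields the estimate $\mathcal{O}(e^{-C(\eta)\sqrt{|\lambda|}\epsilon})$. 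Summing the three contributions proves the claim. The only step deserving care is the lower bound $\overline{\rho}(x^\ast,y)\ge C_0\epsilon$; once this is in hand the lemma is bookkeeping between \autoref{Lem. 2} and \autoref{Lem. A.1}.
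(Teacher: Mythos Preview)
Your proposal is correct and follows essentially the same approach as the paper: expand via the product rule into three summands, handle the first with \autoref{Lem. 2} and the exponential Bessel decay, and dispatch the remaining two using bounded derivatives of the cut-off together with \autoref{Lem. A.1}. Your explicit isolation of the lower bound $\overline{\rho}(x^\ast,y)\ge C_0\epsilon$ is exactly the step the paper invokes somewhat implicitly when it writes ``since $d(x,\partial M)\in[\epsilon,2\epsilon]$ is bounded away from $0$'' before passing to $K_{\nicefrac{n}{2}-1}(C(\eta)\sqrt{|\lambda|}\epsilon)$.
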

\begin{proof}
By the product rule an easy calculation yields
\begin{align*}
&\phantom{=}(\lambda - \Delta^g_x)\left(
\frac{\sqrt{\lambda}^{\nicefrac{n}{2}-1}}{\sqrt{2\pi}^n}
\chi \left( \frac{{\rho}(x,\partial M)}{\epsilon} \right)  
\frac{K_{\nicefrac{n}{2}-1}(\sqrt{\lambda} \overline{\rho}(x^\ast, y))}{\overline{\rho}(x^\ast, y)^{\nicefrac{n}{2}-1}} \right) \\
&= \chi \left( \frac{{\rho}(x,\partial M)}{\epsilon} \right)
(\lambda - \Delta^g_x)\left(
\frac{\sqrt{\lambda}^{\nicefrac{n}{2}-1}}{\sqrt{2\pi}^n}
\frac{K_{\nicefrac{n}{2}-1}(\sqrt{\lambda} \overline{\rho}(x^\ast, y))}{\overline{\rho}(x^\ast, y)^{\nicefrac{n}{2}-1}} \right) \\
&- \Delta^g_x
\left( 
\chi \left( \frac{{\rho}(x,\partial M)}{\epsilon} \right)
\right) 
\frac{\sqrt{\lambda}^{\nicefrac{n}{2}-1}}{\sqrt{2\pi}^n}
\frac{K_{\nicefrac{n}{2}-1}(\sqrt{\lambda} \overline{\rho}(x^\ast, y))}{\overline{\rho}(x^\ast, y)^{\nicefrac{n}{2}-1}} \\
&- 
2 \frac{\sqrt{\lambda}^{\nicefrac{n}{2}-1}}{\sqrt{2\pi}^n} 
\left\langle \nabla^g_x \chi \left( \frac{{\rho}(x,\partial M)}{\epsilon} \right),
\nabla^g_x \left( 
\frac{K_{\nicefrac{n}{2}-1}(\sqrt{\lambda} \overline{\rho}(x^\ast, y))}{\overline{\rho}(x^\ast, y)^{\nicefrac{n}{2}-1}} \right)
\right\rangle . 
\end{align*}
Using \autoref{Lem. 1} and \autoref{Lem. 2} one obtains for the first term
\begin{align*}
&\phantom{=}\chi \left( \frac{{\rho}(x,\partial M)}{\epsilon} \right)(\lambda - \Delta^{g}_x) 
\left( \frac{\sqrt{\lambda}^{\nicefrac{n}{2}-1}}{\sqrt{2 \pi}^n} 
\frac{K_{\nicefrac{n}{2}-1}(\sqrt{\lambda} \overline{\rho}(x^\ast,y))}{\overline{\rho}(x^\ast,y)^{\nicefrac{n}{2}-1}} \right) 
\\
&+ \mathcal{O} 
\left(
\sqrt{|\lambda|}^{\nicefrac{n}{2}-1} \frac{
K_{\nicefrac{n}{2}-1}(C(\eta)\sqrt{|\lambda|}\overline{\rho}(x^\ast,y))}{{\overline{\rho}(x^\ast,y)^{\nicefrac{n}{2}}}} \right. \notag \\
&+ \sqrt{|\lambda|}^{\nicefrac{n}{2}} \frac{
K_{\nicefrac{n}{2}}(C(\eta)\sqrt{|\lambda|}\overline{\rho}(x^\ast,y))}{{\overline{\rho}(x^\ast,y)^{\nicefrac{n}{2}-1}}}  \notag \\
&+ 
d(x,\partial M) \left(
\sqrt{|\lambda|}^{\nicefrac{n}{2}-1} \frac{K_{\nicefrac{n}{2}-1}(C(\eta)\sqrt{|\lambda|}\overline{\rho}(x^\ast,y))}{{\overline{\rho}(x^\ast,y)^{\nicefrac{n}{2}+1}}} \right. \notag \\
&+ \sqrt{|\lambda|}^{\nicefrac{n}{2}} \frac{
K_{\nicefrac{n}{2}}(C(\eta)\sqrt{|\lambda|}\overline{\rho}(x^\ast,y))}{{\overline{\rho}(x^\ast,y)^{\nicefrac{n}{2}}}} \notag \\
&+ \left. \left. \sqrt{|\lambda|}^{\nicefrac{n}{2}+1} \frac{
K_{\nicefrac{n}{2}+1}(C(\eta)\sqrt{|\lambda|}\overline{\rho}(x^\ast,y))}{{\overline{\rho}(x^\ast,y)^{\nicefrac{n}{2}-1}}}
\right) \notag
%
+ e^{-C \sqrt{|\lambda|}\varepsilon}
\right)
\end{align*}
Since $d(x,\partial M) \in [\epsilon,2\epsilon]$ is bounded away from $0$, \autoref{Lem. A.1}
yields
\begin{align*}
\frac{K_{\nicefrac{n}{2}-1}(C(\eta) \sqrt{|\lambda|} \overline{\rho}(x^\ast,y))}{\overline{\rho}(x^\ast,y)^{\nicefrac{n}{2}+1}} 
&\leq  
\frac{K_{\nicefrac{n}{2}-1}(C(\eta) \sqrt{|\lambda|} \epsilon)}{\epsilon^{\nicefrac{n}{2}+1}} .
\end{align*}
Since $d(x,\partial M) < 2 \varepsilon$ and
\begin{align*}
K_\alpha (\sqrt{|\lambda|} \epsilon) = \mathcal{O}( e^{-\sqrt{|\lambda|\epsilon}} ),
\end{align*}
one concludes that
\begin{align*}
&\phantom{=}\chi \left( \frac{{\rho}(x,\partial M)}{\epsilon} \right)(\lambda - \Delta^{g}_x) 
\left( \frac{\sqrt{\lambda}^{\nicefrac{n}{2}-1}}{\sqrt{2 \pi}^n} 
\frac{K_{\nicefrac{n}{2}-1}(\sqrt{\lambda} \overline{\rho}(x^\ast,y))}{\overline{\rho}(x^\ast,y)^{\nicefrac{n}{2}-1}} \right)
= \mathcal{O}\left(
e^{-C(\eta) \sqrt{|\lambda|}\epsilon} \right) 
\end{align*}
for $\lambda \in \Sigma_{\pi-\eta}$ with $|\lambda| \geq 1$. 
Since $|\nabla^g_x \rho|_g$ is bounded on $S_{2\varepsilon} \setminus S_{\varepsilon}$ and
$|\Delta^g_x \rho|_g \leq \frac{C}{\rho}$ on $S_{2\varepsilon} \setminus S_{\varepsilon}$, it follows that
\begin{align*}
\nabla^g_x  \left( \chi\left(\frac{\rho(x,\partial M)}{\epsilon}\right)
\right)
= \chi'\left(\frac{\rho(x,\partial M)}{\epsilon}\right) \frac{\nabla^g_x \rho(x,\partial M)}{\epsilon}
= \mathcal{O}(1)
\end{align*}
and
\begin{align*}
\Delta^g_x \left( \chi\left(\frac{\rho(x,\partial M)}{\epsilon}\right)
\right)
= \chi''\left(\frac{\rho(x,\partial M)}{\epsilon}\right) \frac{|\nabla^g_x \rho(x,\partial M)|^2}{\epsilon^2}
+ \chi'\left(\frac{\rho(x,\partial M)}{\epsilon}\right) \frac{\Delta^\delta_x \rho(x,\partial M)}{\epsilon}
= \mathcal{O}(1) .
\end{align*}
Hence, the second term satisfies
\begin{align*}
\Delta^g_x
\left( 
\chi \left( \frac{{\rho}(x,\partial M)}{\epsilon} \right)
\right) 
\frac{\sqrt{\lambda}^{\nicefrac{n}{2}-1}}{\sqrt{2\pi}^n}
\frac{K_{\nicefrac{n}{2}-1}(\sqrt{\lambda} \overline{\rho}(x^\ast, y))}{\overline{\rho}(x^\ast, y)^{\nicefrac{n}{2}-1}}
= \mathcal{O}\left(
e^{-C(\eta) \sqrt{|\lambda|}\epsilon} \right) 
\end{align*}
for $\lambda \in \Sigma_{\pi - \eta}$ with $|\lambda|\geq 1$. 
Since
\begin{align*}
\nabla^g_x \left( 
\frac{K_{\nicefrac{n}{2}-1}(\sqrt{\lambda} \overline{\rho}(x^\ast, y))}{\overline{\rho}(x^\ast, y)^{\nicefrac{n}{2}-1}} \right)
&=
\frac{\sqrt{\lambda}K'_{\nicefrac{n}{2}-1}(\sqrt{\lambda} \overline{\rho}(x^\ast, y))\nabla^g_{x}\overline{\rho}(x^\ast,y)}{\overline{\rho}(x^\ast, y)^{\nicefrac{n}{2}-1}} \\
&-
(\nicefrac{n}{2}-1)
\frac{K_{\nicefrac{n}{2}-1}(\sqrt{\lambda} \overline{\rho}(x^\ast, y))\nabla^g_{x}\overline{\rho}(x^\ast,y)}{\overline{\rho}(x^\ast, y)^{\nicefrac{n}{2}}} \\ 
&=
\frac{\sqrt{\lambda}K_{\nicefrac{n}{2}}(\sqrt{\lambda} \overline{\rho}(x^\ast, y))\nabla^g_{x}\overline{\rho}(x^\ast,y)}{2\overline{\rho}(x^\ast, y)^{\nicefrac{n}{2}-1}} \\
&+\frac{\sqrt{\lambda}K_{\nicefrac{n}{2}-2}(\sqrt{\lambda} \overline{\rho}(x^\ast, y))\nabla^g_{x}\overline{\rho}(x^\ast,y)}{2\overline{\rho}(x^\ast, y)^{\nicefrac{n}{2}-1}} \\
&-
(\nicefrac{n}{2}-1)
\frac{K_{\nicefrac{n}{2}-1}(\sqrt{\lambda} \overline{\rho}(x^\ast, y))\nabla^g_{x}\overline{\rho}(x^\ast,y)}{\overline{\rho}(x^\ast, y)^{\nicefrac{n}{2}}} \\
&= \mathcal{O}\left(
e^{-C(\eta) \sqrt{|\lambda|}\epsilon} \right),
\end{align*}
we conclude
\begin{align*}
\frac{\sqrt{\lambda}^{\nicefrac{n}{2}-1}}{\sqrt{2\pi}^n} 
\left\langle \nabla^g_x \chi \left( \frac{{\rho}(x,\partial M)}{\epsilon} \right),
\nabla^g_x \left( 
\frac{K_{\nicefrac{n}{2}-1}(\sqrt{\lambda} \overline{\rho}(x^\ast, y))}{\overline{\rho}(x^\ast, y)^{\nicefrac{n}{2}-1}} \right)
\right\rangle
= \mathcal{O}\left(
e^{-C(\eta) \sqrt{|\lambda|}\epsilon} \right) 
\end{align*}
for $\lambda \in \Sigma_{\pi - \eta}$ for $|\lambda|\geq 1$.
Summing up the claim follows. 
\end{proof}

Now we are prepared to show that $K_{\lambda}$ is approximately a Green's function for $\lambda - \Delta^g_x$. 

\begin{thm}\label{Error estimate}
The integral operators $G_\lambda$ satisfy
\begin{align*}
\| (\lambda- \Delta^g_x)G_\lambda f - f \|_{L^\infty(M)} \leq \frac{C(\eta)}{\sqrt{|\lambda|}} \| f \|_{L^\infty(M)}
\end{align*}
for $\lambda \in \Sigma_{\pi-\eta}$
with $|\lambda|\geq 1$, $\eta > 0$, and $f \in \mathrm{C}(\overline{M})$. 
\end{thm}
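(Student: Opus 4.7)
My plan is to move the operator $(\lambda - \Delta^g_x)$ inside the integral defining $G_\lambda f$ and use \autoref{Lem. 1}, \autoref{Lem. 2} and \autoref{Lem. 3} to write $(\lambda-\Delta^g_x)K_\lambda(x,y) = \delta_x(y) + E(x,y)$ with an explicit error kernel $E$; the statement will then reduce to proving the $L^\infty$-bound $\sup_{x\in\overline{M}}\int_M|E(x,y)|\,dy \leq C(\eta)/\sqrt{|\lambda|}$.

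First I would split according to the three cases in the definition of $K_\lambda$. For $d(x,\partial M)>2\epsilon$, only the first Bessel term contributes and \autoref{Lem. 1} alone yields $\delta_x(y)+E(x,y)$. For $d(x,\partial M)\in[\epsilon,2\epsilon]$, \autoref{Lem. 1} handles the main term and \autoref{Lem. 3} controls the cut-off reflected term, producing only an exponentially small error because the cut-off kills the Dirac mass at $x^\ast$. For $d(x,\partial M)<\epsilon$, \autoref{Lem. 1} together with \autoref{Lem. 2} gives $\delta_x(y)-\delta_{x^\ast}(y)+E(x,y)$, and the Dirac mass at $x^\ast\in\widetilde M\setminus M$ vanishes upon integration against any $f\in\rC(\overline{M})$. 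Hence in every case
\[
(\lambda-\Delta^g_x)G_\lambda f(x)-f(x)=\int_M E(x,y)\,f(y)\,dy,
\]
so the theorem is reduced to an $L^\infty\to L^\infty$ bound for the integral operator with kernel $E$.

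Every summand of $E$ is either exponentially small (of the form $e^{-C(\eta)\sqrt{|\lambda|}\epsilon}$, which is trivially $O(|\lambda|^{-1/2})$ for $|\lambda|\geq 1$) or a Bessel-type kernel of shape $\sqrt{|\lambda|}^{n/2-1+j}K_{n/2-1+j}(c\sqrt{|\lambda|}r)/r^{n/2-1+\ell}$, evaluated at $r=\rho(x,y)$ (the interior errors from \autoref{Lem. 1}, and analogous ones from \autoref{Lem. 2} after the metric-replacement step) or at $r=\overline{\rho}(x^\ast,y)$ (the reflected contributions). The $\rho$-type terms I would control by the same substitution $s=c\sqrt{|\lambda|}r$ used in \autoref{Greensfunction estimate}, combined with \autoref{Lem. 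A.1} and \autoref{Lem. A.2}. The $\overline{\rho}(x^\ast,\cdot)$-type terms I would control by \autoref{Cor. A.3}, using the geometric lower bound $\overline{\rho}(x^\ast,y)\geq d(x,\partial M)$ valid for $y\in M$; this bound absorbs each prefactor $d(x,\partial M)$ from \autoref{Lem. 2} into an extra inverse power of $\overline{\rho}(x^\ast,y)$ and thereby reduces all reflected errors to the standard form treated by \autoref{Cor. A.3}.

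The main obstacle is the scaling: the dominant contribution comes from the reflected kernel, which is integrated only over $y\in M$ and therefore loses one boundary-normal direction of integration relative to the interior case. This loss trades the interior rate $|\lambda|^{-1}$ of \autoref{Greensfunction estimate} for $|\lambda|^{-1/2}$, pinning the global rate of the error precisely at $C(\eta)/\sqrt{|\lambda|}$; checking that no other summand of $E$ does worse is the bookkeeping at the heart of the argument.
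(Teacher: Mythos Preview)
Your proposal is correct and follows essentially the same route as the paper: split into the three boundary-distance regimes, apply \autoref{Lem. 1}, \autoref{Lem. 2}, \autoref{Lem. 3} to write $(\lambda-\Delta^g_x)K_\lambda=\delta_x+E$, discard $\delta_{x^\ast}$ since $x^\ast\notin M$, and then bound $\int_M|E(x,y)|\,dy$ via \autoref{Lem. A.2} and \autoref{Cor. A.3}, absorbing the prefactor $d(x,\partial M)$ into $\overline{\rho}(x^\ast,y)$ exactly as you describe. One small correction to your closing heuristic: the $|\lambda|^{-1/2}$ rate does not come from ``losing a boundary-normal direction of integration'' (the $y$-integral is over all of $M$ in every regime), but from the fact that the error terms in \autoref{Lem. 2} carry one extra inverse power of $\overline{\rho}$ compared with those in \autoref{Lem. 1}, which in turn traces back to the first-order metric error $|\nabla g-\nabla(\sigma^\ast g)|_g=\mathcal{O}(1)$ rather than $\mathcal{O}(d(x,\partial M))$.
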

\begin{proof}
For $x \in \overline{M} \setminus S_{2\epsilon}$  \autoref{Lem. 1} yields
\begin{align*}
\| (\lambda - \Delta^g_x) G_{\lambda} f - f \|_{L^\infty(M \setminus S_{2\epsilon})}
&\leq \sup_{ x \in \overline{M} \setminus S_{2\epsilon} }
\left| \int_M \delta_x(y) f(y) \, dy - f(x) \right| \\
&+ \mathcal{O} 
\left( \left( 
\sup_{ x \in \overline{M} \setminus S_{2\epsilon} }
\sqrt{|\lambda|}^{\nicefrac{n}{2}}
\int_M   \frac{K_{\nicefrac{n}{2}}(C(\eta)\sqrt{|\lambda|}\rho(x,y))}{\rho(x,y)^{\nicefrac{n}{2}-2}} \, dy \right. \right.  \\
&+ \sup_{ x \in \overline{M} \setminus S_{2\epsilon} }
\sqrt{|\lambda|}^{\nicefrac{n}{2}-1} \int_M \frac{K_{\nicefrac{n}{2}-1}(C(\eta)\sqrt{|\lambda|}\rho(x,y))}{\rho(x,y)^{\nicefrac{n}{2}-1}} 
\, dy  \\
&+ \left. \left. \int_M e^{-C(\eta) \sqrt{|\lambda|}\varepsilon} \, dy 
\right) \| f \|_{L^\infty(M)} \right)
\end{align*}
for $\lambda \in \Sigma_{\pi-\eta}$ with $|\lambda|\geq 1$ and $f \in \mathrm{C}(\overline{M})$. 
Therefore, by \autoref{Lem. A.2} it follows that
\begin{align*}
\| (\lambda - \Delta^g_x) G_{\lambda} f - f \|_{L^\infty(M\setminus S_{2\epsilon})} 
\leq \frac{C(\eta)}{\sqrt{|\lambda|}} \| f \|_{L^\infty(M)} 
\end{align*}
for $\lambda \in \Sigma_{\pi-\eta}$ with $|\lambda|\geq 1$ and $f \in \mathrm{C}(\overline{M})$. For $x \in S_{\epsilon}$ we obtain by \autoref{Lem. 1} and \autoref{Lem. 2} 
\begin{align*}
\| (\lambda - \Delta^g_x) G_{\lambda} f - f \|_{L^\infty(S_{\epsilon})}
&\leq \sup_{ x \in S_{\epsilon} }
\left| \int_M \delta_x(y) f(y) \, dy - f(x) \right| \\
&+ \mathcal{O} 
\left( \left( 
\sup_{ x \in S_{\epsilon} }
\sqrt{|\lambda|}^{\nicefrac{n}{2}}
\int_M   \frac{K_{\nicefrac{n}{2}}(C(\eta)\sqrt{|\lambda|}\rho(x,y))}{\rho(x,y)^{\nicefrac{n}{2}-2}} \, dy \right. \right.  \\
&+ \sup_{ x \in S_{\epsilon} }
\sqrt{|\lambda|}^{\nicefrac{n}{2}-1} \int_M \frac{K_{\nicefrac{n}{2}-1}(C(\eta)\sqrt{|\lambda|}\rho(x,y))}{\rho(x,y)^{\nicefrac{n}{2}-1}} 
\, dy  \\
&+ \sup_{ x \in S_{\epsilon} }
\sqrt{|\lambda|}^{\nicefrac{n}{2}}
\int_M   \frac{K_{\nicefrac{n}{2}}(C(\eta)\sqrt{|\lambda|}\overline{\rho}(x^\ast,y))}{\overline{\rho}(x^\ast,y)^{\nicefrac{n}{2}-2}} \, dy \\
&+ \sup_{ x \in S_{\epsilon} }
\sqrt{|\lambda|}^{\nicefrac{n}{2}-1} \int_M \frac{K_{\nicefrac{n}{2}-1}(C(\eta)\sqrt{|\lambda|}\overline{\rho}(x^\ast,y))}{\overline{\rho}(x^\ast,y)^{\nicefrac{n}{2}-1}} 
\, dy  \\
&+ \sup_{ x \in S_{\epsilon} }
\sqrt{|\lambda|}^{\nicefrac{n}{2}-1}
\int_M  d(x,\partial M)
\frac{K_{\nicefrac{n}{2}-1}(C(\eta)\sqrt{|\lambda|} \overline{\rho}(x^\ast,y))}{\overline{\rho}(x^\ast,y)^{\nicefrac{n}{2}+1}} \, dy \\
&+ \sup_{ x \in S_{\epsilon} }
\sqrt{|\lambda|}^{\nicefrac{n}{2}}
\int_M  d(x,\partial M)
\frac{K_{\nicefrac{n}{2}}(C(\eta)\sqrt{|\lambda|} \overline{\rho}(x^\ast,y))}{\overline{\rho}(x^\ast,y)^{\nicefrac{n}{2}}} \, dy \\
&+ \sup_{ x \in S_{\epsilon} }
\sqrt{|\lambda|}^{\nicefrac{n}{2}-1}
\int_M  d(x,\partial M)
\frac{K_{\nicefrac{n}{2}+1}(C(\eta)\sqrt{|\lambda|} \overline{\rho}(x^\ast,y))}{\overline{\rho}(x^\ast,y)^{\nicefrac{n}{2}+1}} \, dy \\
&+ \left. \left. \int_M e^{-C(\eta) \sqrt{|\lambda|}\varepsilon} \, dy 
\right) \| f \|_{L^\infty(M)} \right)
\end{align*}
for $f \in \mathrm{C}(\overline{M})$. 
Since $\overline{\rho}(x^\ast,y)$ only vanish if $x, y \in \partial M$ and $d(x,\partial M) = d(x^\ast, \partial M)
\leq \overline{\rho}(x^\ast,y)$ for $x, y$ near $\partial M$, \autoref{Lem. A.2} and \autoref{Cor. A.3} imply 
\begin{align*}
\|(\lambda - \Delta^g_x) G_{\lambda} f - f \|_{L^\infty(S_{2\epsilon})} 
\leq \frac{C(\eta)}{\sqrt{|\lambda|}} \| f \|_{L^\infty(M)} 
\end{align*}
for $|\lambda|$and $f \in \mathrm{C}(\overline{M})$.
Moreover, we have for $x \in S_{2\epsilon} \setminus S_{\epsilon}$ by \autoref{Lem. 1} and \autoref{Lem. 3}
\begin{align*}
\| (\lambda - \Delta^g_x) G_{\lambda} f - f \|_{L^\infty(S_{2\epsilon} \setminus S_{\epsilon})}
&\leq \sup_{ x \in S_{2\epsilon} \setminus S_{\epsilon} }
\left| \int_M \delta_x(y) f(y) \, dy - f(x) \right| \\
&+ \mathcal{O} 
\left( \left( 
\sup_{ x \in S_{2\epsilon} \setminus S_{\epsilon} }
 \sqrt{|\lambda|}^{\nicefrac{n}{2}} \int_M 
\frac{K_{\nicefrac{n}{2}}(C(\eta)\sqrt{|\lambda|} \rho(x,y))}{\rho(x,y)^{\nicefrac{n}{2}-2}} \, dy  \right. \right. \\
&+
\sup_{ x \in S_{2\epsilon} \setminus S_{\epsilon} }
 \sqrt{|\lambda|}^{\nicefrac{n}{2}-1} \int_M 
\frac{K_{\nicefrac{n}{2}-1}(C(\eta)\sqrt{|\lambda|} \rho(x,y))}{\rho(x,y)^{\nicefrac{n}{2}-1}} \, dy \\
&+ 
\left. \left.  \int_M e^{-C \sqrt{|\lambda|}\epsilon} \, dy 
\right) \| f \|_{L^\infty(M)} \right) 
\end{align*}
for $f \in \mathrm{C}(\overline{M})$. 

Since 
$\overline{M}$ is compact, it follows that
\begin{align*}
\int_M \frac{ e^{-C(\eta) \sqrt{|\lambda|}\epsilon} }{\epsilon^{\nicefrac{n}{2}+1}} \, 
dy  \leq \frac{\tilde{C}(\eta)}{\sqrt{|\lambda|}}
\end{align*}
for $|\lambda| \geq 1$. Hence, as a consequence of \autoref{Lem. A.2} one obtains
\begin{align*}
\| G_{\lambda} f - f \|_{L^\infty(S_{2\epsilon} \setminus S_{\epsilon})} 
\leq \frac{C(\eta)}{\sqrt{|\lambda|}} \| f \|_{L^\infty(M)} 
\end{align*}
for $|\lambda|$and $f \in \mathrm{C}(\overline{M})$.
Summing up we conclude that
\begin{align*}
\|(\lambda - \Delta^g_x) G_{\lambda} f - f \|_{L^\infty(M)}
\leq \frac{C(\eta)}{\sqrt{|\lambda|}} \| f \|_{L^\infty(M)} 
\end{align*}
for $|\lambda|$and $f \in \mathrm{C}(\overline{M})$.
\end{proof}

Finally, we obtain the main theorem by combining
the estimates from \autoref{Greensfunction estimate} and \autoref{Error estimate}.

\begin{thm}\label{main thm}
The operator $\Delta^g_0$ is sectorial of angle $\nicefrac{\pi}{2}$ on $\mathrm{C}(\overline{M})$. 
\end{thm}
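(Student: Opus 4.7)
The plan is to combine the kernel bound of \autoref{Greensfunction estimate} with the error estimate of \autoref{Error estimate} by a Neumann-series argument. Fix $\eta > 0$ and choose $r = r(\eta) \geq 1$ large enough that $C(\eta)/\sqrt{|\lambda|} \leq \tfrac{1}{2}$ for all $\lambda \in \Sigma_{\pi-\eta}$ with $|\lambda| \geq r$. For such $\lambda$, \autoref{Error estimate} gives $(\lambda - \Delta^g) G_\lambda = \Id + E_\lambda$ on $\mathrm{C}(\overline{M})$ with $\|E_\lambda\| \leq \tfrac{1}{2}$, so $\Id + E_\lambda$ is invertible via the Neumann series with $\|(\Id + E_\lambda)^{-1}\| \leq 2$.

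Before concluding, I would verify that $G_\lambda$ maps $\mathrm{C}(\overline{M})$ into $D(\Delta^g_0)$. The Dirichlet condition $(G_\lambda f)|_{\partial M} = 0$ is encoded directly in the kernel: for $x \in \partial M$ one has $x = x^\ast$, so the two reflection terms in the first branch of $K_\lambda$ cancel exactly. For the $W^{2,p}$-regularity, the identity $\Delta^g(G_\lambda f) = \lambda G_\lambda f - f - E_\lambda f \in L^\infty(M)$, together with the Dirichlet condition, allows classical elliptic $L^p$-regularity for the Dirichlet problem on the smooth compact manifold $\overline{M}$ to yield $G_\lambda f \in W^{2,p}(M)$ for every $p \in [1,\infty)$; in particular $\Delta^g(G_\lambda f) \in \mathrm{C}(\overline{M})$, so $G_\lambda f \in D(\Delta^g_0)$.

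Combined with the injectivity from \autoref{injective}, the operator $\lambda - \Delta^g_0$ is then bijective on $\{\lambda \in \Sigma_{\pi-\eta} : |\lambda| \geq r\}$, with bounded inverse $R(\lambda, \Delta^g_0) = G_\lambda (\Id + E_\lambda)^{-1}$, and \autoref{Greensfunction estimate} directly yields the sectorial estimate $\|\lambda R(\lambda, \Delta^g_0)\| \leq 2 C(\eta)$ there. For the complementary bounded region $\{\lambda \in \Sigma_{\pi-\eta} : |\lambda| < r\}$, the compact Sobolev embedding $W^{2,p}(M) \hookrightarrow \mathrm{C}(\overline{M})$ (for $p > n$) makes the resolvent compact; together with the injectivity of $\lambda - \Delta^g_0$ on all of $\C \setminus \R_-$, this forces $\sigma(\Delta^g_0) \subset \R_-$ to be discrete, so $\lambda \mapsto \lambda R(\lambda, \Delta^g_0)$ is continuous on $\Sigma_{\pi-\eta}$ and vanishes as $\lambda \to 0$, which upgrades the bound to a uniform $M(\eta)$ on the entire sector. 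This is precisely sectoriality of angle $\nicefrac{\pi}{2}$, and compactness of the resolvent follows from the same Sobolev embedding.

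The main technical hurdle is the elliptic regularity step for $G_\lambda f$: the cut-off $\chi$ introduces non-smoothness of $K_\lambda$ in the transition region $S_{2\epsilon}\setminus S_\epsilon$ which must be absorbed into the Agmon--Douglis--Nirenberg $L^p$-estimates. Everything else is a bookkeeping exercise once the two principal estimates from Section~2 are in hand.
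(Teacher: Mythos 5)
Your core argument matches the paper's: you form $(\lambda-\Delta^g)G_\lambda = \Id + E_\lambda$ from \autoref{Error estimate}, invert it by Neumann series for large $|\lambda|$, combine with \autoref{injective} to identify $R(\lambda,\Delta^g_0) = G_\lambda(\Id+E_\lambda)^{-1}$, verify $G_\lambda f \in D(\Delta^g_0)$ via elliptic $L^p$-regularity, and read off the resolvent bound from \autoref{Greensfunction estimate}. Your explicit check that $(G_\lambda f)|_{\partial M}=0$ because $x=x^\ast$ on $\partial M$ is a welcome detail that the paper leaves implicit in the reflection construction.

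Where you diverge is the final step. The paper simply invokes \cite[Thm.~3.7.11 \&~Cor.~3.7.17]{ABHN:01}, which pass from the resolvent estimate on $\{\lambda \in \Sigma_{\pi-\eta} : |\lambda|\geq r\}$ directly to generation of an analytic semigroup of angle $\nicefrac{\pi}{2}-\eta$ via the standard translation argument; since $\eta>0$ was arbitrary, sectoriality of angle $\nicefrac{\pi}{2}$ follows. You instead try to upgrade the estimate to a uniform one on the entire sector $\Sigma_{\pi-\eta}$ by compactness and discreteness of the spectrum. This aims at a \emph{bounded} analytic semigroup, which is more than the theorem requires, and your last inference has a gap: for $\lambda R(\lambda,\Delta^g_0)$ to be continuous on $\Sigma_{\pi-\eta}$ and vanish as $\lambda\to 0$, you need $0\in\rho(\Delta^g_0)$, but "$\sigma(\Delta^g_0)\subset\R_-$'' does not exclude $0$ from the spectrum because $0\in\R_-$ in this paper's convention, and \autoref{injective} is stated only for $\lambda\in\C\setminus\R_-$. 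The fact itself is true (run the Green's-formula argument at $\lambda=0$: $0=-\int_M|\nabla^g f|_g^2$ forces $\nabla^g f=0$, and the Dirichlet condition then gives $f=0$), but you would have to say so; or, more economically, drop the small-$|\lambda|$ discussion entirely and cite a generation theorem as the paper does, since the large-$|\lambda|$ estimate is all that is needed.
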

\begin{proof}
For $\lambda \in \Sigma_{\pi-\eta}$ with sufficient large absolute value $|\lambda|$ \autoref{Error estimate} implies that
\begin{align*}
\| (\lambda - \Delta^g) G_{\lambda} - \Id \|
\leq \frac{C(\eta)}{\sqrt{|\lambda|}} < 1 , 
\end{align*}
hence $(\lambda - \Delta^g)G_{\lambda}$ is invertible. 
Therefore
\begin{align*}
\Id = (\lambda - \Delta^g) G_{\lambda} ((\lambda - \Delta^g)G_{\lambda})^{-1}
\end{align*}
and $(\lambda - \Delta^g)$ is right-invertible with right-inverse
\begin{align*}
(\lambda - \Delta^g)^{-1} = G_{\lambda} ((\lambda - \Delta^g)G_{\lambda})^{-1}.
\end{align*} 
Hence, by \autoref{injective} the operator $(\lambda - \Delta^g)$ is invertible
and 
\begin{align*}
(\lambda - \Delta^g)^{-1} = G_{\lambda} ((\lambda - \Delta^g)G_{\lambda})^{-1}.
\end{align*} 
In particular, we obtain
\begin{align*}
\Delta^g G_{\lambda} ((\lambda - \Delta^g)G_{\lambda})^{-1} f = \lambda G_{\lambda} ((\lambda - \Delta^g)G_{\lambda})^{-1} f - f \in \mathrm{C}(\overline{M}) 
\end{align*}
for all $f \in \mathrm{C}(\overline{M})$. 
Moreover $G_{\lambda} ((\lambda - \Delta^g)G_{\lambda})^{-1} f$ is a solution of 
\begin{align*}
\begin{cases}
\Delta_x^g u = \lambda u -f, \\
u|_{\partial M} = 0   
\end{cases}
\end{align*}
for $\lambda \in \Sigma_{\pi-\eta}$ with sufficient large absolute value $|\lambda|$. Since $f \in \rC(M) \subset L^p(M)$ for every $p \geq 1$, elliptic regularity (cf. \cite[Thm.~8.12]{GT:01}) implies $G_{\lambda} ((\lambda - \Delta^g)G_{\lambda})^{-1} f \in \bigcap_{p \geq 1}W^{2,p}(M)$. Therefore $G_{\lambda} ((\lambda - \Delta^g)G_{\lambda})^{-1} f \in D(A_0)$ and one concludes $R(\lambda, \Delta^g_0) = G_{\lambda} ((\lambda - \Delta^g)G_{\lambda})^{-1}$ for $\lambda \in \Sigma_{\pi-\eta}$ with sufficient large absolute value $|\lambda|$.
Thus by \autoref{Greensfunction estimate} it follows that
\begin{align*}
\| R(\lambda, \Delta^g_0) \| \leq \| G_{\lambda} \| \cdot \| ((\lambda - \Delta^g)G_\lambda)^{-1} \| \leq \frac{C(\eta)}{|\lambda|} 
\end{align*} 
for $\lambda \in \Sigma_{\pi-\eta}$ with sufficient large absolute value $|\lambda|$.
By \cite[Thm.~3.7.11]{ABHN:01} and \cite[Cor.~3.7.17]{ABHN:01}, $\Delta^g_0$ is sectorial of angle $\nicefrac{\pi}{2}$. 
\end{proof} 

\section{Strict elliptic operators with Dirichlet boundary conditions}

In this section we consider strictly elliptic second-order differential operators
with Dirichlet boundary conditions on the space $\mathrm{C}(\overline{M})$ of the continuous functions for a smooth, compact, Riemannian manifold $(\overline{M},g)$
with smooth boundary $\partial M$. 
To this end, take real-valued functions 
\begin{equation*}
a_j^k = a_k^j \in \rC^{\infty}(\overline{M})
, \quad b_j, c \in \mathrm{C}(\overline{M}), \quad 1 \leq j,k \leq n. 
\end{equation*}
satisfying the strict ellipticity condition 
\begin{equation*}
a^k_j(q) g^{jl}(q) X_k(q) X_l(q) > 0 \quad \text{ for all } q \in \overline{M} 
\end{equation*}
for all co-vectorfields $X_k, X_l$ on $\overline{M}$ with $(X_1(q),\dots, X_n(q)) \not = (0, \dots, 0)$ 
and define on $\mathrm{C}(\overline{M})$ the differential operator in divergence form with Dirichlet boundary conditions as
\begin{align}
A_0 f &:=
\sqrt{|a|}\text{div}_g \left(\frac{1}{\sqrt{|a|}} a \nabla_M^g f \right) + \langle b, \nabla_M^g f \rangle + c f \label{Def:A_0 M} \\
\text{with domain} \notag \\
D(A_0) &:= \left\{ f \in \bigcap_{p \geq 1} W^{2,p}(M) \cap C_0(\overline{M}) \colon A_0 f \in \mathrm{C}(\overline{M}) \right\}, \notag
\end{align}
where $a = a^k_j$, $|a|=\det(a^k_j)$ and $b = (b_1, \dots, b_n)$.

\smallskip

The key idea is to reduce the strictly elliptic operator on $\overline{ M}$, equipped by $g$, to the Laplace-Beltrami operator on $\overline{M}$, corresponding to a new metric $\tilde{g}$. 

\smallskip

For this purpose we consider a $(2,0)$-tensorfield on $\overline{M}$ given by
\begin{align*}
\tilde{g}^{kl} = a^k_i g^{il} .
\end{align*}
Its inverse $\tilde{g}$ is a $(0,2)$-tensorfield on $\overline{M}$, which is a Riemannian metric since $a^k_j g^{jl}$ is strictly elliptic on $\overline{M}$. 
We denote $\overline{M}$ with the old metric by $\overline{M}^g$ and
with the new metric by $\overline{M}^{\tilde{g}}$ and remark that
$\overline{M}^{\tilde{g}}$ is a smooth, compact, orientable Riemannian manifold with smooth boundary $\partial M$. 
Since the differentiable structures of $\overline{M}^g$ and $\overline{ M}^{\tilde{g}}$ coincide, the identity
\begin{equation*}
\Id \colon \overline{M}^g \longrightarrow \overline{M}^{\tilde{g}}
\end{equation*}
is a $C^\infty$-diffeomorphism. Hence,
the spaces
\begin{align*}
C(\overline{M}) &:= C(\overline{M}^{\tilde{g}}) = C(\overline{M}^g) 
\end{align*} 
coincide.
Moreover, \cite[Prop. 2.2]{Heb:00} implies that the spaces
\begin{align}
L^p(M) &:= L^p(M^{\tilde{g}}) = L^p(M^g),\label{Sobolev} \\
W^{k,p}(M) &:= W^{k,p}(M^{\tilde{g}}) = W^{k,p}(M^g), \notag 
\end{align} 
for all $p \geq 1$ and $k \in \N$ coincide.
We now denote by $\Delta^{\tilde{g}}_0$ the operator defined as in 
\eqref{Def:Delta} respecting $\tilde{g}$. Moreover we denote by $\tilde{A}_0$ the operator given in \eqref{Def:A_0 M} for $b_k = c = 0$. 

\begin{lem}\label{Stoerung}
	The operator $A_0$ and $\tilde{A}_0$ differ only by a relatively bounded perturbation of bound $0$.
\end{lem}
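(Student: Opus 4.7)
First I would verify that the two operators share the same domain, so that ``differ by'' is well-posed. Since $D(\tilde A_0)\subset\bigcap_{p\ge 1}W^{2,p}(M)$ and for $p>n$ one has $W^{2,p}(M)\hookrightarrow C^1(\overline M)$, any $f\in D(\tilde A_0)$ has a continuous gradient, hence $\langle b,\nabla^g_M f\rangle+cf\in\rC(\overline M)$. Writing $A_0f=\tilde A_0f+\langle b,\nabla^g_M f\rangle+cf$ therefore shows $D(\tilde A_0)\subseteq D(A_0)$, and the symmetric argument gives the reverse inclusion. Thus the operators coincide as sets, and the difference is the first-order operator
\begin{equation*}
(A_0-\tilde A_0)f=\langle b,\nabla^g_M f\rangle+cf,\qquad f\in D(\tilde A_0).
\end{equation*}

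The zero-order piece is trivially bounded by $\|c\|_\infty\|f\|_\infty$, so the whole task reduces to controlling $\|\nabla^g_M f\|_\infty$ by $\|\tilde A_0 f\|_\infty$ with an arbitrarily small constant. The plan for this is the standard Ehrling-style interpolation. Fix $p>n$. Since $\tilde A_0$ is a strictly elliptic operator in divergence form with smooth coefficients and Dirichlet boundary conditions, classical $L^p$-regularity up to the boundary (e.g.\ \cite[Ch.~9]{GT:01}) gives
\begin{equation*}
\|f\|_{W^{2,p}(M)}\le C_p\bigl(\|\tilde A_0f\|_{L^p(M)}+\|f\|_{L^p(M)}\bigr)
\le C'_p\bigl(\|\tilde A_0f\|_\infty+\|f\|_\infty\bigr),
\end{equation*}
where the compactness of $\overline M$ is used in the second step. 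Sobolev embedding $W^{2,p}(M)\hookrightarrow C^{1,\alpha}(\overline M)$ for some $\alpha>0$ (valid because $p>n$), combined with Arzelà--Ascoli, then shows that $[D(\tilde A_0)]\stackrel{c}{\hookrightarrow}C^1(\overline M)\hookrightarrow\rC(\overline M)$ with the first embedding compact.

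I would now invoke Ehrling's lemma on this triple: for every $\epsilon>0$ there exists $C_\epsilon>0$ such that
\begin{equation*}
\|f\|_{C^1(\overline M)}\le\epsilon\bigl(\|f\|_\infty+\|\tilde A_0f\|_\infty\bigr)+C_\epsilon\|f\|_\infty
\qquad(f\in D(\tilde A_0)).
\end{equation*}
Combining this with the pointwise bound $\|(A_0-\tilde A_0)f\|_\infty\le\|b\|_\infty\|\nabla^g_M f\|_\infty+\|c\|_\infty\|f\|_\infty$ and absorbing the $\epsilon\|f\|_\infty$ contribution into a larger constant yields, after rescaling $\epsilon$ by $\|b\|_\infty$,
\begin{equation*}
\|(A_0-\tilde A_0)f\|_\infty\le\epsilon\|\tilde A_0f\|_\infty+C'_\epsilon\|f\|_\infty,
\end{equation*}
which is exactly relative bound $0$ with respect to $\tilde A_0$.

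The only nontrivial ingredient is the global $W^{2,p}$-estimate for $\tilde A_0$ with Dirichlet data on a manifold with boundary; once that is in hand, the Sobolev embedding, Arzelà--Ascoli and Ehrling are entirely mechanical. I would therefore expect the delicate point to be checking that the cited elliptic regularity theorem applies in this manifold setting (as opposed to a Euclidean domain), which is standard but deserves a pointer to the literature.
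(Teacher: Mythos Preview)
Your argument is correct and follows essentially the same route as the paper: identify the difference $A_0-\tilde A_0$ as the first-order perturbation $Pf=\langle b,\nabla^g_M f\rangle+cf$, establish the compact embedding $[D(\tilde A_0)]\stackrel{c}{\hookrightarrow}C^1(\overline M)\hookrightarrow\rC(\overline M)$, and conclude via Ehrling's lemma. The only cosmetic difference is that where you invoke global $W^{2,p}$-elliptic regularity explicitly to obtain continuity of $[D(\tilde A_0)]\to W^{2,p}(M)$, the paper compresses this step by appealing to Morrey's embedding together with the closed graph theorem; the content is the same.
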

\begin{proof}
	Consider
	\begin{align*}
	Pf :=  g^{kl} b_k \partial_l f + c f 
	\end{align*}
	for $f \in D(A_0) \cap D(\tilde{A}_0)$.
Since $D(\tilde{A}_0)$ is contained in $\bigcap_{p > 1} W^{2,p}(M)$, Morreys embedding (cf. \cite[Chap.~V.~and~Rem.~5.5.2]{Ada:75}) and the closed graph theorem imply
	\begin{align}
	[D(\tilde{A}_0)] \stackrel{c}{\hookrightarrow} C^1(\overline{M}) \hookrightarrow \mathrm{C}(\overline{M}),
	\label{Embeddings}
	\end{align}
	in particular $D(\tilde{A}_0)$ and $D(A_0)$ coincide. 
	Since $P \in \mathcal{L}(C^1(\overline{M}), \mathrm{C}(\overline{M}))$ and it follows by \eqref{Embeddings} and Ehrling's Lemma
	(see \cite[Thm.~6.99]{RR:93}) that $P$ is relatively $\tilde{A}_0$-bounded with bound $0$.
\end{proof}

\begin{lem}\label{LB}
The operator $\tilde{A}_0$ equals the Laplace-Beltrami operator $\Delta^{\tilde{g}}_0$ with respect to $\tilde{g}$. 
\end{lem}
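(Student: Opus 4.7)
The plan is to perform a direct local-coordinate calculation: rewrite $\tilde{A}_0$ using the local formula for $\text{div}_g$ and verify that, after using the definition $\tilde{g}^{kl} = a^k_i g^{il}$ and a determinant identity, the result is exactly the coordinate expression for $\Delta^{\tilde{g}}$. Equality of the domains will then follow from \eqref{Sobolev}.

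First, I would apply the local expression $\text{div}_g(X) = \tfrac{1}{\sqrt{|g|}} \partial_k(\sqrt{|g|}\, X^k)$ to the vector field $X^k = \tfrac{1}{\sqrt{|a|}}(a \nabla^g f)^k$. Using $(\nabla^g f)^i = g^{il}\partial_l f$ and the definition of $\tilde{g}$, the contraction reads
\begin{align*}
(a \nabla^g f)^k = a^k_i\, g^{il}\, \partial_l f = \tilde{g}^{kl}\, \partial_l f,
\end{align*}
which is nothing else than $(\nabla^{\tilde{g}} f)^k$. Substituting yields
\begin{align*}
\tilde{A}_0 f = \sqrt{|a|}\cdot \frac{1}{\sqrt{|g|}}\, \partial_k\!\left( \frac{\sqrt{|g|}}{\sqrt{|a|}}\, \tilde{g}^{kl}\, \partial_l f\right).
\end{align*}

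Next, I would establish the key identity $\sqrt{|\tilde{g}|} = \sqrt{|g|}/\sqrt{|a|}$. Regarding $\tilde{g}^{kl}$, $a^k_i$, $g^{ij}$ as matrices, the defining relation $\tilde{g}^{kl} = a^k_i g^{il}$ reads $(\tilde{g}^{-1}) = a\cdot g^{-1}$, so $\det(\tilde{g}^{-1}) = |a|/|g|$ and hence $|\tilde{g}| = |g|/|a|$; positivity of $|a|$ is guaranteed by strict ellipticity. Plugging $\sqrt{|\tilde{g}|} = \sqrt{|g|}/\sqrt{|a|}$ into the expression above transforms it into
\begin{align*}
\tilde{A}_0 f = \frac{1}{\sqrt{|\tilde{g}|}}\, \partial_k\!\left(\sqrt{|\tilde{g}|}\, \tilde{g}^{kl}\, \partial_l f\right) = \Delta^{\tilde{g}} f,
\end{align*}
which is the standard local coordinate form of the Laplace--Beltrami operator for $\tilde{g}$.

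Finally, I would argue that the domains coincide. By \eqref{Sobolev} the spaces $W^{2,p}(M)$, $\rC_0(M)$, and $\rC(\overline{M})$ do not depend on whether $M$ is equipped with $g$ or $\tilde{g}$, because the two metrics induce the same differentiable structure. Hence the conditions defining $D(\tilde{A}_0)$ and $D(\Delta^{\tilde{g}}_0)$ agree verbatim, and combined with the pointwise identity $\tilde{A}_0 f = \Delta^{\tilde{g}} f$ this gives $\tilde{A}_0 = \Delta^{\tilde{g}}_0$. The computation is essentially bookkeeping; the only place one has to be careful is the determinant identity and the correct contraction of indices in a $(1,1)$-tensor, which is where strict ellipticity enters to ensure $|a|>0$.
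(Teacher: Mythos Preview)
Your proof is correct and follows essentially the same approach as the paper's own argument: a direct local-coordinate computation using the identity $|g| = |a|\cdot|\tilde{g}|$ together with \eqref{Sobolev} for the domain. Your version is in fact more detailed, since you justify the determinant identity and spell out the domain equality explicitly, while the paper's proof condenses this to two lines.
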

\begin{proof}
Using \eqref{Sobolev}, we calculate in local coordinates
\begin{align*}
\tilde{A}_0 f &= \frac{1}{\sqrt{|g|}} \sqrt{|a|} \partial_j \left(\sqrt{|g|} \frac{1}{\sqrt{|a|}} a_l^j g^{kl} \partial_k f\right) \\
&= \frac{1}{\sqrt{|\tilde{g}|}} \partial_j \left(\sqrt{|\tilde{g}|} \tilde{g}^{kl} \partial_k f\right) 
\end{align*}
for $f \in D(\tilde{A}_0)$, since $|g| = |a| \cdot |\tilde{g}|$.
\end{proof}

\begin{thm}\label{main theorem general form}
The operator $A_0$ is sectorial of angle $\nicefrac{\pi}{2}$ on $\mathrm{C}(\overline{M})$. 
\end{thm}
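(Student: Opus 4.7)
The plan is to reduce the general strictly elliptic case to the Dirichlet Laplace--Beltrami case already established in Section~2, and then to recover $A_0$ from that Laplace--Beltrami operator by a relatively bounded perturbation of bound zero. The three ingredients---\autoref{main thm}, \autoref{LB} and \autoref{Stoerung}---have already been prepared; what remains is to assemble them.

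First I would apply \autoref{main thm} to the new Riemannian manifold $(\overline{M},\tilde{g})$. Since $\tilde{g}$ is a smooth Riemannian metric on the same compact manifold $\overline{M}$ with the same smooth boundary $\partial M$, all hypotheses of Section~2 are satisfied for the new metric, and hence the Dirichlet Laplace--Beltrami operator $\Delta^{\tilde{g}}_0$ is sectorial of angle $\nicefrac{\pi}{2}$ on $\mathrm{C}(\overline{M})$. By \autoref{LB} the operator $\tilde{A}_0$ coincides with $\Delta^{\tilde{g}}_0$, which transfers sectoriality of angle $\nicefrac{\pi}{2}$ directly from $\Delta^{\tilde{g}}_0$ to $\tilde{A}_0$.

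Second I would invoke \autoref{Stoerung}: one has $D(A_0) = D(\tilde{A}_0)$ and the difference $P := A_0 - \tilde{A}_0$ is $\tilde{A}_0$-bounded with relative bound $0$. A standard perturbation theorem for generators of analytic semigroups (see e.g.~\cite{EN:00}) then yields that, for every $\theta < \nicefrac{\pi}{2}$, the sum $A_0 = \tilde{A}_0 + P$ also generates an analytic semigroup of angle $\theta$ on $\mathrm{C}(\overline{M})$. Since this holds for every such $\theta$, $A_0$ is sectorial of angle $\nicefrac{\pi}{2}$.

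I do not anticipate any truly hard step here, since the heavy analytic work has already been carried out: the Green's function construction of Section~2 for a general smooth metric, and the change-of-metric together with the identification of the lower--order part as a relatively bounded perturbation in \autoref{LB} and \autoref{Stoerung}. The only subtle conceptual point is the preservation of the \emph{optimal} angle $\nicefrac{\pi}{2}$, which would generally fail for a merely $\tilde{A}_0$-bounded perturbation with positive relative bound. In our situation this step comes for free precisely because Morrey's embedding and Ehrling's lemma, as used in \autoref{Stoerung}, drive the relative bound all the way down to $0$.
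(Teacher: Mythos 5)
Your proposal is correct and follows the paper's own proof exactly: both first identify $\tilde A_0$ with $\Delta^{\tilde g}_0$ via \autoref{LB}, invoke \autoref{main thm} for the metric $\tilde g$ to get sectoriality of angle $\nicefrac{\pi}{2}$, and then pass to $A_0$ using the bound-$0$ relative perturbation of \autoref{Stoerung} together with the perturbation theorem for analytic semigroups ([EN:00, Thm.~III.2.10]). The only cosmetic difference is that you recover the optimal angle by letting $\theta \uparrow \nicefrac{\pi}{2}$, whereas that cited theorem already preserves the angle directly when the relative bound is $0$.
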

\begin{proof}
By \autoref{main thm} and \autoref{LB} it follows that $\tilde{A}_0$ generates an analytic semigroup of angle $\nicefrac{\pi}{2}$ on $\mathrm{C}(\overline{M})$. Finally \autoref{Stoerung} and
\cite[Thm.~III.~2.10]{EN:00} implies the claim. 
\end{proof}

\begin{rem}
This generalizes \cite[Cor.~3.1.21.(ii)]{Lun:95} to manifolds with boundary.
\end{rem}

By \autoref{main theorem general form} the abstract Cauchy problem \eqref{ACP} is well-posed.
This implies the existence and uniqueness of a continuous solution $u$ of the initial value-boundary problem \eqref{I-B-P},
having an analytic extension in a right half space in the time variable. Moreover, $u(t), A_0 u(t) \in C^\infty(M) \cap \mathrm{C}(\overline{M})$ for all $t > 0$. 

\begin{cor}\label{compactness}
The resolvents $R(\lambda,A_0)$ are compact operators for all $\lambda \in \rho(A_0)$. 
\end{cor}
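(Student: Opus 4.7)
The plan is to combine the compact embedding of the graph-norm domain into $\mathrm{C}(\overline{M})$ with the standard fact that the resolvent of a closed operator maps into its domain continuously with respect to the graph norm.

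First, recall that in the proof of \autoref{Stoerung} we already observed the chain of embeddings
\begin{equation*}
[D(\tilde A_0)]\ \stackrel{\mathrm c}{\hookrightarrow}\ C^1(\overline{M})\ \hookrightarrow\ \mathrm{C}(\overline{M}),
\end{equation*}
where the first embedding is compact by Morrey together with Rellich--Kondrachov (applied to the Sobolev spaces $W^{2,p}(M)$ for $p$ large), and where also $D(A_0)=D(\tilde A_0)$ as vector spaces. Since $A_0$ and $\tilde A_0$ differ only by a $\tilde A_0$-bounded perturbation with bound $0$, the two graph norms $\|\cdot\|_{A_0}$ and $\|\cdot\|_{\tilde A_0}$ are equivalent, so we also have
\begin{equation*}
[D(A_0)]\ \stackrel{\mathrm c}{\hookrightarrow}\ \mathrm{C}(\overline{M}).
\end{equation*}

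Next, fix any $\lambda_0\in\rho(A_0)$, which exists by \autoref{main theorem general form}. The resolvent $R(\lambda_0,A_0)\colon \mathrm{C}(\overline{M})\to [D(A_0)]$ is bounded by the closed graph theorem (or directly from $\|A_0 R(\lambda_0,A_0)f\|\le \|f\|+|\lambda_0|\,\|R(\lambda_0,A_0)f\|$). Composing with the compact embedding above yields that $R(\lambda_0,A_0)\colon \mathrm{C}(\overline{M})\to \mathrm{C}(\overline{M})$ is compact.

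Finally, to pass from one $\lambda_0$ to every $\lambda\in\rho(A_0)$ one invokes the resolvent identity
\begin{equation*}
R(\lambda,A_0)=R(\lambda_0,A_0)+(\lambda_0-\lambda)R(\lambda,A_0)R(\lambda_0,A_0),
\end{equation*}
which shows that $R(\lambda,A_0)$ is the sum of a compact operator and a bounded operator composed with a compact operator, hence compact. No serious obstacle arises; the only point worth checking is the equivalence of the graph norms of $A_0$ and $\tilde A_0$, which follows immediately from the relative bound $0$ perturbation established in \autoref{Stoerung}.
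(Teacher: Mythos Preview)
Your proof is correct and follows essentially the same approach as the paper: the paper simply cites the compact embedding \eqref{Embeddings} together with \cite[Prop.~II.4.25]{EN:00}, and your steps 4--6 (bounded resolvent into $[D(A_0)]$ composed with a compact embedding, then the resolvent identity to pass to all $\lambda$) are precisely the content of that proposition. You also make explicit the equivalence of the graph norms of $A_0$ and $\tilde A_0$, which the paper leaves implicit in its one-line proof.
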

\begin{proof}
This follows immediately by \eqref{Embeddings} and \cite[Prop.~II.~4.25]{EN:00}.
\end{proof}

We finish this section with the special case of closed manifolds, i.e. $\partial M = \emptyset$. Then the Dirichlet boundary conditions gets an empty condition. Hence the operator $A_0$ becomes
\begin{align*}
A f &:=
\sqrt{|a|} \text{div}_g \left( \frac{1}{\sqrt{|a|}} a \nabla_M^g f\right) + \langle b, \nabla_M^g f \rangle + c f, \notag \\
\text{with domain} \\
D(A) &:= \left\{ f \in \bigcap_{p \geq 1} W^{2,p}(M) \colon A_0 f \in \mathrm{C}(M) \right\} \notag .
\end{align*}
Remark that then $d(x,\partial M) = d(x,\emptyset) = \infty$ and the kernel $K_\lambda$ becomes much easier. 

\begin{cor}
If the manifold $M$ is closed, the operator $A$ generates a compact and analytic semigroup of angle $\nicefrac{\pi}{2}$ on $\rC(M)$.
\end{cor}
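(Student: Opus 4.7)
The plan is to run the same two-stage strategy used for the manifold-with-boundary case, but note that since $\partial M=\emptyset$ almost everything simplifies. Because the Dirichlet boundary condition is vacuous, the domain $D(A)$ is just $\{f\in\bigcap_{p\ge1}W^{2,p}(M):Af\in\rC(M)\}$, and the reduction step of Section~3 applies verbatim: introducing the metric $\tilde g$ defined by $\tilde g^{kl}=a^k_i g^{il}$, \autoref{LB} identifies the principal-part operator $\tilde A$ with the Laplace--Beltrami operator $\Delta^{\tilde g}$ on $(\overline M,\tilde g)$, and \autoref{Stoerung} shows the first-order and zero-order part $Pf=g^{kl}b_k\partial_l f+cf$ is a relatively $\tilde A$-bounded perturbation of bound $0$. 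Thus it suffices to prove that $\Delta^{\tilde g}$ is sectorial of angle $\nicefrac{\pi}{2}$ on $\rC(M)$ and has compact resolvent, since the permanence property of analytic generators of angle $\nicefrac{\pi}{2}$ under such perturbations (\cite[Thm.~III.2.10]{EN:00}) then delivers the claim for $A$.

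For the Laplace--Beltrami step, I would repeat the analysis of Section~2 with the much simpler kernel
\begin{align*}
K_\lambda(x,y):=\frac{\sqrt{\lambda}^{\nicefrac{n}{2}-1}}{\sqrt{2\pi}^n}\,\frac{K_{\nicefrac{n}{2}-1}(\sqrt{\lambda}\,\rho(x,y))}{\rho(x,y)^{\nicefrac{n}{2}-1}},
\end{align*}
with no reflection term and no cut-off near the boundary, since $d(x,\partial M)=\infty$ everywhere. First, injectivity of $\lambda-\Delta^{\tilde g}$ on $\C\setminus\R_-$ follows exactly as in \autoref{injective} from Green's formula, where now the boundary integral is automatically absent. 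Second, the resolvent-type bound $\|G_\lambda f\|_{L^\infty(M)}\le C(\eta)|\lambda|^{-1}\|f\|_{L^\infty(M)}$ on $\Sigma_{\pi-\eta}\cap\{|\lambda|\ge1\}$ follows from the single estimate of \autoref{Greensfunction estimate} restricted to the interior regime (the case $M\setminus S_{2\epsilon}$ now being the whole manifold). Third, the approximate Green's-function identity reduces to the content of \autoref{Lem. 1} alone, and the error estimate $\|(\lambda-\Delta^{\tilde g})G_\lambda f-f\|_{L^\infty(M)}\le C(\eta)|\lambda|^{-1/2}\|f\|_{L^\infty(M)}$ is an immediate specialisation of \autoref{Error estimate} with the reflection contributions suppressed.

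With these two ingredients in hand, the argument of \autoref{main thm} goes through unchanged: for $|\lambda|$ large enough in $\Sigma_{\pi-\eta}$, the operator $(\lambda-\Delta^{\tilde g})G_\lambda$ is invertible on $\rC(M)$ by Neumann series, so by the injectivity step $\lambda-\Delta^{\tilde g}$ is bijective with $R(\lambda,\Delta^{\tilde g})=G_\lambda\bigl((\lambda-\Delta^{\tilde g})G_\lambda\bigr)^{-1}$, and the bound $\|R(\lambda,\Delta^{\tilde g})\|\le C(\eta)|\lambda|^{-1}$ together with \cite[Thm.~3.7.11 \& Cor.~3.7.17]{ABHN:01} yields sectoriality of angle $\nicefrac{\pi}{2}$. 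Elliptic regularity places the range of $R(\lambda,\Delta^{\tilde g})$ in $\bigcap_{p\ge1}W^{2,p}(M)$, as in \autoref{main thm}.

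Finally, compactness of the resolvent follows from the chain of embeddings $[D(A)]\hookrightarrow\bigcap_{p>1}W^{2,p}(M)\stackrel{c}{\hookrightarrow}C^1(M)\hookrightarrow\rC(M)$ via Morrey's embedding and Rellich--Kondrachov on the closed manifold $M$, exactly as invoked in \autoref{compactness}. I do not expect a genuine obstacle here: the only care needed is to confirm that every argument in Section~2 that used the reflected point $x^\ast$ or the boundary cut-off $\chi(\rho(x,\partial M)/\epsilon)$ is simply deleted, which is legitimate because $\partial M=\emptyset$ makes those terms absent from the definition of $K_\lambda$ in the first place.
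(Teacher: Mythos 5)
Your route re-derives the sectoriality estimate from scratch with the simplified kernel, which is sound, but it is unnecessarily long: \autoref{main theorem general form} already applies with $\partial M=\emptyset$, and then $A_0=A$, so one may simply cite it. The more serious issue is that your argument stops at ``$A$ is sectorial of angle $\nicefrac{\pi}{2}$ with compact resolvent'' and does not get all the way to the claim.

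First, sectoriality of angle $\nicefrac{\pi}{2}$ in the sense of \cite{ABHN:01} does \emph{not} by itself give a strongly continuous analytic semigroup on all of $\rC(M)$; for that one needs $D(A)$ to be dense. In the boundary case this fails ($\overline{D(A_0)}=\rC_0(M)\subsetneq\rC(\overline M)$), which is precisely why \autoref{main theorem general form} asserts only sectoriality and not generation. In the closed case density does hold, but it must be verified: the paper observes $\rC^2(M)\subset D(A)$ and $\rC^2(M)$ is dense in $\rC(M)$, and then invokes \cite[Thm.~III.4.6]{EN:00}. Your proof omits this step entirely, so as written it would only deliver a holomorphic semigroup on $\overline{D(A)}$ rather than a generator on $\rC(M)$.

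Second, ``compact resolvent'' is not the same as ``compact semigroup''; you need one more ingredient, namely the norm continuity of the semigroup for $t>0$ (automatic for analytic semigroups), and then \cite[Thm.~II.4.29]{EN:00} gives compactness of the operators $T(t)$. Your proposal establishes compactness of $R(\lambda,A)$ via the embedding chain but never converts this into compactness of the semigroup. Both gaps are easy to fill, but they are genuine omissions relative to what the statement asserts.
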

\begin{proof}
Since $\rC^2(M) \subset D(A)$ and $\rC^2(M) \subset \rC(M)$ dense, it follows that $A$ is densely defined. Now \autoref{main theorem general form} and \cite[Thm.~III.4.6]{EN:00} imply that $A$ generates an
analytic semigroup of angle $\nicefrac{\pi}{2}$ on $\rC(M)$. Finally, the compactness of the semigroup follows by \autoref{compactness} and \cite[Thm.~II.4.29]{EN:00}. 
\end{proof}

\section{Strictly elliptic operators with Dirichlet boundary conditions II}
\label{sec:localisation}

In this section we give a more elementary and easier proof of \autoref{mthm} following from the analogous results for elliptic operators on domains just by the definitions of the considered objects. 

\smallskip 

We consider a smooth, compact Riemannian manifold $(\overline{M},g)$ with smooth boundary $\partial M$ and a strictly elliptic differential operator $A_0$ in divergence form with Dirichlet boundary conditions given by \eqref{Def:A_0 M}.
Now the resolvent problem
\begin{equation}
	\lambda u - A_0 u = f \text{ on } \overline{M}
	\label{eq:res}
\end{equation}
can be written in local coordinates.
Since $\overline{M}$ is compact there exists a finite atlas $(U_i,\phi_i)$ with $i = 1, \dots, r$, i.e. $\overline{M} = \bigcup\limits_{i = 1}^r U_i$ and $\phi_i \colon U_i \to V_i \subset \R^n_+$ are diffeomorphisms. 
Let $(\chi_i)_{i = 1,\dots,r}$ be the partition of unity subordinated to the covering $(U_i)_{i \in {1,\dots,r}}$, i.e. $\chi_i \in \rC^\infty_c(U_i,\R)$, $\chi_i(q) \in [0,1]$ and $\sum_{i = 1}^r \chi_i = 1$.
Denote by
$u_i := \chi_i \cdot u$, $f_i := \chi_i \cdot f$, $\tilde{u}_i := u_i \circ \phi_i^{-1}$, $\tilde{f}_i = f_i \circ \phi_i^{-1}$.
Since $u|_{\partial M} = 0$ and $\chi_i$ vanishes on $\overline{M} \setminus U_i$ we get 
$u_i\in \bigcap\limits_{p \geq 1} \mathrm{W}^{2,p}(U_i) \cap \rC_0(U_i)$, $f_i \in \rC(U_i)$
and
$\tilde{u}_i \in \bigcap\limits_{p \geq 1} \mathrm{W}^{2,p}(V_i) \cap \rC_0(V_i)$, $\tilde{f}_i \in \rC(V_i)$.
Further let $A_i :=  $
Since by definition $A u = \sum_{i = 1}^r A_i u_i$ we obtain 
\begin{equation*}
	\begin{cases}	
		\lambda u_i - A_i u_i = f_i \text{ on } U_i \\
		\phantom{.}\qquad u_i = 0 \text{ on } \partial U_i 
	\end{cases}
\end{equation*}
for all $i \in \{1,\dots,r\}$.
Using 
\begin{align*}
	\tilde{a}^{ij} &:= (g^{ik}a^{j}_k\partial_i \chi \partial_j \chi)\circ \phi^{-1} \\
	\tilde{b}^k &:= \left( g^{ik}a_k^j\partial_{ij}^2 \chi^k + \left( \sqrt{\frac{|a|}{|g|}} \partial_i \left(  \sqrt{\frac{|g|}{|a|}} g^{kl} a^i_k \right) + b^l \right) \partial_l \chi_k \right)  \circ \phi^{-1} \\
	\tilde{c} &:= c \circ \phi^{-1}
\end{align*}
and $\tilde{A}_i := \tilde{a}^{ij} \partial_{ij}^2 + \tilde{b}^k \partial_k + \tilde{c}$ we obtain
\begin{equation}
	( A_i u_i ) \circ \phi^{-1} = \tilde{A}_i \tilde{u}_i 
	\text{ on } V_i
	\label{eq:res local M}  
\end{equation}
and therefore 
\begin{equation}
	\begin{cases}	
		\lambda \tilde{u}_i - \tilde{A}_i \tilde{u}_i = \tilde{f}_i \text{ on } V_i \\
		\phantom{.}\qquad
		\tilde{u}_i = 0 \text{ on } \partial V_i 
	\end{cases}
	\label{eq:res local}
\end{equation}
for $i \in \{ 1, \dots, r \}$.
Thus our resolvent problem \eqref{eq:res} is by definition just the collection of finite many resolvent problems for strictly elliptic operator $\tilde{A}_i$ on bounded sets $V_i$. 
Since every (second-countable) manifold (with boundary) admits an adequate atlas we can even choose $V_i = B_1(0) \subset \R^n_+$.
Now it follows from \cite[Cor. 3.1.21 (ii)]{Lun:95}
that \eqref{eq:res local} admits a unique solution for all $\lambda \in \Sigma_{\nicefrac{\pi}{2}-\eta}\cap \{ \re(\lambda) > \Lambda_i \}$ for constants $\Lambda_i$ and $\eta > 0$. Let $\Lambda := \max_{i = 1}^r \Lambda_i$.
We conclude that all equations \eqref{eq:res local M} have unique solutions $\tilde{u}_i \in \bigcap\limits_{p \geq 1} \mathrm{W}^{2,p}(U_i) \cap \rC_0(U_i)$ for $\lambda \in \Sigma_{\nicefrac{\pi}{2}-\eta}\cap \{ \re(\lambda) > \Lambda \}$ for $\eta > 0$.
This implies that \eqref{eq:res} has a unique solution $u \in D(A_0)$
for $\lambda \in \Sigma_{\nicefrac{\pi}{2}-\eta}\cap \{ \re(\lambda) > \Lambda \}$ for $\eta > 0$.
Further, since $|\lambda-\Lambda| \cdot \|\tilde{u}_i\|_{\rC(V_i)} \leq \| \tilde{f}_i \|_{\rC(V_i)}$ we conclude 
$|\lambda-\Lambda| \cdot \|{u}_i\|_{\rC(U_i)} \leq \| {f}_i \|_{\rC(U_i)}$ and
$|\lambda| \cdot \|u\|_{\rC(\overline{M})} \leq \| f \|_{\rC(\overline{M})}$
for $\lambda \in \Sigma_{\nicefrac{\pi}{2}-\eta}\cap \{ \re(\lambda) > \Lambda \}$ for $\eta > 0$, i.e.
$A_0$ is sectorial of angle $\nicefrac{\pi}{2}$ on $\rC(\overline{M})$.


\appendix

\section{Bessel functions}

The solutions of the ordinary differential equation 
\begin{align}
z^2 \frac{d^2}{dz^2} f(z) + z \frac{d}{dz} f(z) = (z^2 + \alpha^2) f(z) 
\label{def:Bessel}
\end{align}
for $z \in \C$ are called \emph{modified Bessel functions of order $\alpha \in \R$}. In particular we have the following. 

\begin{prop}\label{App:def}
The 
modified Bessel functions of first kind of order $\alpha \in \R$
are given by
\begin{align*}
I_\alpha(z) = \sum_{k = 0}^\infty 
\frac{ \left( \frac{z}{2} \right)^{2k + \alpha} }{\Gamma( k + \alpha + 1 )k!}
\end{align*} 
for $z \in \C \setminus \R_-$, where $\Gamma$ denotes the Gamma function. 
Moreover we obtain the 
\emph{modified Bessel function of second kind of order $\alpha \in \R\setminus \Z$}
by
\begin{align*}
K_\alpha (z) = \frac{\pi}{2} \cdot \frac{I_{-\alpha}(z) - I_\alpha(z) }{\sin( \pi \alpha )}
\end{align*}
for $z \in \C \setminus \R_-$. If $\alpha \in \Z$, there exists a sequence $(\alpha_n)_{n \in \N} \subset \R \setminus \Z$ such that $\alpha_n \to \alpha$ and $K_\alpha$ is the limit
\begin{align*}
K_\alpha (z) := \lim\limits_{n \to \infty} K_{\alpha_n} (z)
\end{align*}
for $z \in \C \setminus \R_-$.
\end{prop}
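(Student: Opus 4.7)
The plan is to verify that the series and limit formulas given do define functions satisfying the modified Bessel equation \eqref{def:Bessel}, treating the first-kind and second-kind cases in turn.

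First I would address $I_\alpha$. Writing $a_k := \left(\tfrac{z}{2}\right)^{2k+\alpha} / (\Gamma(k+\alpha+1)\,k!)$, the ratio $|a_{k+1}/a_k| = |z/2|^2 / ((k+1)(k+\alpha+1))$ tends to $0$ for every $z$, so the series converges absolutely and locally uniformly on all of $\C$ once a branch of $z^\alpha$ has been fixed; on $\C\setminus\R_-$ we take the principal branch, which is holomorphic there. Term-by-term differentiation is then justified and yields $I_\alpha\in\mathcal{O}(\C\setminus\R_-)$. Plugging the series into \eqref{def:Bessel}, a direct index-shift computation (using $\Gamma(k+\alpha+1) = (k+\alpha)\Gamma(k+\alpha)$) shows that the coefficient of each power of $z$ vanishes, so $I_\alpha$ solves the ODE.

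Next, for $\alpha \in \R\setminus\Z$, I would verify that $I_\alpha$ and $I_{-\alpha}$ are linearly independent: their leading behaviour at $z\to 0^+$ is $z^\alpha/(2^\alpha\Gamma(\alpha+1))$ and $z^{-\alpha}/(2^{-\alpha}\Gamma(-\alpha+1))$ respectively, which are of different homogeneity whenever $\alpha\notin\Z$. Since \eqref{def:Bessel} is a linear second-order ODE with holomorphic coefficients on $\C\setminus\R_-$, any linear combination is again a solution, so the defining formula for $K_\alpha$ makes sense (the denominator $\sin(\pi\alpha)$ is nonzero) and produces a holomorphic solution on $\C\setminus\R_-$.

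The remaining — and only mildly delicate — point is the existence of the limit defining $K_\alpha$ for integer $\alpha$. The apparent obstruction is that both numerator and denominator in $K_{\alpha_n}$ vanish as $\alpha_n\to\alpha\in\Z$, since $I_{-m}(z) = I_m(z)$ for $m\in\Z$ (observe that $1/\Gamma(k-m+1) = 0$ for $k<m$, and reindex). I would apply L'Hôpital in the parameter: using holomorphy of $\alpha\mapsto I_\alpha(z)$ on a neighbourhood of $\alpha\in\Z$ (coefficient-wise differentiation in $\alpha$ is permitted by the uniform bounds from the ratio test), one obtains
\begin{equation*}
K_\alpha(z) = \tfrac{1}{2}\left.\left(\tfrac{\partial I_{-\beta}}{\partial\beta} - \tfrac{\partial I_\beta}{\partial\beta}\right)\right|_{\beta=\alpha}\cdot(-1)^\alpha,
\end{equation*}
so the limit exists, is independent of the sequence $(\alpha_n)$, and depends holomorphically on $z\in\C\setminus\R_-$. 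The hardest step is this integer-order limit, though everything reduces to standard manipulations with $\Gamma$ and termwise differentiation; the rest is routine power-series work.
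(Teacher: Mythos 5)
Your argument is correct. Note, however, that the paper offers no proof of this proposition at all: it is stated as standard background on Bessel functions (the relevant reference being Watson's treatise, cited later in the appendix), so there is nothing to compare your route against. Your verification is the standard one and is complete in all essentials: the ratio test and termwise differentiation for $I_\alpha$, the index-shift computation showing the series solves \eqref{def:Bessel}, linear independence of $I_{\pm\alpha}$ via the leading orders $z^{\pm\alpha}$ for $\alpha\notin\Z$, the identity $I_{-m}=I_m$ for $m\in\Z$ coming from the poles of $\Gamma$, and L'H\^opital in the parameter (justified by holomorphy of $\alpha\mapsto I_\alpha(z)$, since $1/\Gamma$ is entire) to obtain the integer-order limit $K_m(z)=\tfrac{(-1)^m}{2}\bigl(\partial_\beta I_{-\beta}-\partial_\beta I_\beta\bigr)\big|_{\beta=m}$, independent of the approximating sequence. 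The only point you leave implicit is that this limit is again a solution of \eqref{def:Bessel} with parameter $m$; this follows immediately from locally uniform convergence of $K_{\alpha_n}$ (hence of all derivatives) together with continuity of the equation in $\alpha$, and is worth one sentence if you want the write-up to be airtight.
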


First we prove an estimate for the modified Bessel function of second kind. 

\begin{lem}\label{Lem. A.1}
Let $\alpha \in \R$ and $\eta > 0$. 
Then there exists a constant $C(\eta) > 0$ such that
\begin{align*}
|K_\alpha(z)| \leq K_\alpha (C(\eta)|z|) 
\end{align*}
for all $z \in \Sigma_{\nicefrac{\pi}{2}-\eta}$.
\end{lem}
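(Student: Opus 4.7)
The plan is to use the standard integral representation of the modified Bessel function of the second kind, namely
\begin{align*}
K_\alpha(w) = \int_0^\infty e^{-w \cosh t} \cosh(\alpha t)\, dt,
\end{align*}
which is valid for all $\alpha \in \R$ provided $\operatorname{Re} w > 0$. Since every $z \in \Sigma_{\pi/2-\eta}$ satisfies $\operatorname{Re} z > 0$, this representation applies to our $z$, and of course it also applies to the real positive argument $C(\eta)|z|$ on the right-hand side.

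The next step is a geometric observation about the sector. For $z \in \Sigma_{\pi/2-\eta}$ we have $\arg z = \theta$ with $|\theta| < \tfrac{\pi}{2} - \eta$, hence
\begin{align*}
\operatorname{Re} z = |z|\cos\theta \geq |z|\cos\bigl(\tfrac{\pi}{2}-\eta\bigr) = |z|\sin\eta.
\end{align*}
This suggests setting $C(\eta):=\sin\eta > 0$, so that $\operatorname{Re} z \geq C(\eta)|z|$ uniformly on $\Sigma_{\pi/2-\eta}$.

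With these two ingredients the estimate is a one-line computation. Taking moduli inside the integral (the kernel $\cosh(\alpha t)$ is real and nonnegative) gives
\begin{align*}
|K_\alpha(z)| &\leq \int_0^\infty \bigl|e^{-z \cosh t}\bigr| \cosh(\alpha t)\, dt
= \int_0^\infty e^{-\operatorname{Re}(z) \cosh t} \cosh(\alpha t)\, dt \\
&\leq \int_0^\infty e^{-C(\eta)|z| \cosh t} \cosh(\alpha t)\, dt = K_\alpha(C(\eta)|z|),
\end{align*}
where in the second inequality we used $\operatorname{Re} z \geq C(\eta)|z|$ together with $\cosh t \geq 1 > 0$ so that the exponential is monotone in the linear coefficient.

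I do not anticipate any real obstacle here; the only mild subtlety is making sure the integral representation is the right one to cite (it is the standard Macdonald/Schläfli representation, valid for every real $\alpha$ as soon as $\operatorname{Re} w > 0$), and noting that $K_\alpha(C(\eta)|z|)$ is automatically finite and positive for real positive argument, so the bound is meaningful. No approximation by a sequence $\alpha_n \notin \Z$ is needed, since the integral representation itself is continuous in $\alpha$ and holds uniformly in $\alpha$ on compact sets.
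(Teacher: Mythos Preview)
Your proof is correct and follows essentially the same approach as the paper: both use the integral representation $K_\alpha(w)=\int_0^\infty e^{-w\cosh t}\cosh(\alpha t)\,dt$ for $\operatorname{Re} w>0$, bound $|e^{-z\cosh t}|$ by $e^{-\operatorname{Re}(z)\cosh t}$, and then use $\operatorname{Re} z\geq |z|\sin\eta$ on the sector to arrive at $K_\alpha(C(\eta)|z|)$ with $C(\eta)=\sin\eta$.
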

\begin{proof}
Since $\re(z) > 0$ for all $z \in \Sigma_{\nicefrac{\pi}{2}-\eta}$ and $\alpha \in \R$ it follows by \cite[p.~181]{Wat:95} that
\begin{align*}
|K_\alpha(z)| &= \left| \int_0^\infty e^{-z \cosh(t)} \cosh(\alpha t) \, dt \right| \leq \int_0^\infty e^{-\re(z) \cosh(t)} \cosh(\alpha t) \, dt .
\end{align*}
Note that $z = |z|e^{i\varphi}$ with $|\varphi| \in [0,\nicefrac{\pi}{2}-\eta)$. The monotony of the cosinus implies
\begin{align*}
\frac{\re(z)}{|z|} = \cos(\varphi) \geq \cos(\nicefrac{\pi}{2}-\eta) = \sin(\eta)
=: C(\eta) > 0 .
\end{align*}
Using the monotony of the exponential function and the positivity of $\cosh$ we conclude 
\begin{align*}
\int_0^\infty e^{-\re(z) \cosh(t)} \cosh(\alpha t) \, dt
\leq \int_0^\infty e^{- C(\varepsilon)|z|\cosh(t) } \cosh(\alpha t) \, dt = K_\alpha (C(\eta) |z|) 
\end{align*}
for all $z \in \Sigma_{\nicefrac{\pi}{2}-\eta}$.
\end{proof}

Therefore, we obtain an estimate for the kernel. 

\begin{lem}\label{Lem. A.2}
Let $\alpha \in \R$, $k \in [0,\infty)$ and $\lambda \in \Sigma_{\pi-\eta}$ for $\eta > 0$.
If $k + \alpha < n$, we obtain 
\begin{align*}
\sup_{x \in M}
\int_M \frac{K_{\alpha}(C(\eta) \sqrt{|\lambda|} \rho(x,y))}{\rho(x,y)^k} \, dy 
\leq C(\eta) \sqrt{|\lambda|}^{ 
k - n} 
\end{align*}
for $|\lambda|\geq 1$. 
\end{lem}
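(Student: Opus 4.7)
The plan is to extract the $\sqrt{|\lambda|}^{k-n}$ scaling by a change of variables, reducing the claim to the convergence of a standard one-dimensional integral involving the Bessel function $K_\alpha$. Since $K_\alpha = K_{-\alpha}$, I may assume $\alpha \geq 0$. Fix $x \in M$ and split
\begin{equation*}
\int_M \frac{K_\alpha(C(\eta)\sqrt{|\lambda|}\rho(x,y))}{\rho(x,y)^k}\,dy = I_1(x) + I_2(x),
\end{equation*}
where $I_1(x)$ is the integral over $B_\epsilon(x)$ (so that $\rho(x,y)=d(x,y)$) and $I_2(x)$ is over $M\setminus B_\epsilon(x)$ (where $\rho(x,y)\geq \epsilon$).

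For $I_1(x)$, I would choose $\epsilon$ smaller than the injectivity radius of $\overline{M}$ (positive by compactness) and pass to geodesic normal coordinates centred at $x$, then to polar coordinates. The volume form is $\sqrt{|g|}\,r^{n-1}dr\,d\omega$ with $\sqrt{|g|}$ bounded uniformly in $x$ and $\omega$ on $[0,\epsilon]\times S^{n-1}$ by smoothness and compactness. This gives
\begin{equation*}
I_1(x) \leq C \int_0^\epsilon K_\alpha(C(\eta)\sqrt{|\lambda|}\,r)\, r^{n-1-k}\,dr,
\end{equation*}
with $C$ independent of $x$. The substitution $s=C(\eta)\sqrt{|\lambda|}\,r$ now pulls out the scaling:
\begin{equation*}
I_1(x) \leq C'\,\bigl(C(\eta)\sqrt{|\lambda|}\bigr)^{k-n}\int_0^{C(\eta)\sqrt{|\lambda|}\epsilon} K_\alpha(s)\, s^{n-1-k}\,ds.
\end{equation*}
The hypothesis $k+\alpha<n$ is exactly what makes the resulting integral convergent at $0$, using the known asymptotics $K_\alpha(s)=\mathcal{O}(s^{-\alpha})$ (or $\mathcal{O}(|\ln s|)$ when $\alpha=0$) as $s\to 0^+$; convergence at $\infty$ is automatic from the exponential decay $K_\alpha(s)=\mathcal{O}(s^{-1/2}e^{-s})$. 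Hence the truncated integral is bounded uniformly in $\lambda$ by $\int_0^\infty K_\alpha(s)s^{n-1-k}ds<\infty$, which delivers the desired bound on $I_1(x)$.

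For $I_2(x)$, since $\rho(x,y)\geq\epsilon$ and $K_\alpha$ is monotonically decreasing on $\R_+$, I get
\begin{equation*}
I_2(x) \leq \frac{K_\alpha(C(\eta)\sqrt{|\lambda|}\epsilon)}{\epsilon^k}\,\mathrm{vol}(M) \leq C\, e^{-C(\eta)\sqrt{|\lambda|}\epsilon/2},
\end{equation*}
using $K_\alpha(s)=\mathcal{O}(s^{-1/2}e^{-s})$ for $s\to\infty$ and the finiteness of $\mathrm{vol}(M)$. This contribution is dominated by $\sqrt{|\lambda|}^{k-n}$ for $|\lambda|\geq 1$, since $k-n<0$ and the exponential beats any polynomial. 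Summing the two estimates and taking the supremum over $x\in M$ (uniform thanks to the compactness of $\overline{M}$) gives the claim.

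The only delicate point, and the one where I would pay most attention, is the uniformity in $x$: both the Jacobian bound in geodesic normal coordinates and the volume estimate for $I_2$ must hold with constants independent of $x$. Both follow from the compactness of $\overline{M}$ together with the smoothness of the metric, so there is no real obstacle, merely care in the bookkeeping of the constants $C(\eta)$.
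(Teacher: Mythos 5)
Your proof takes essentially the same route as the paper's: split into a ball of fixed radius around $x$ and its complement, pass to Euclidean/polar coordinates on the ball with a uniform Jacobian bound, extract the $\sqrt{|\lambda|}^{k-n}$ factor by the substitution $s=C(\eta)\sqrt{|\lambda|}\,r$, verify finiteness of $\int_0^\infty K_\alpha(s)s^{n-1-k}\,ds$ via the small- and large-argument asymptotics of $K_\alpha$, and handle the complement by exponential decay. Your added remarks on $K_\alpha=K_{-\alpha}$ and on uniformity in $x$ merely make explicit what the paper's terser argument leaves implicit.
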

\begin{proof}
Remark that 
\begin{align*}
\int_{M}
\frac{K_{\alpha}(C(\eta) \sqrt{|\lambda|} {\rho}(x,y))}{{\rho}(x,y)^{k}} \, dy 
&= \int_{B_{R}(x)} 
\frac{K_{\alpha}(C(\eta) \sqrt{|\lambda|} {\rho}(x,y))}{{\rho}(x,y)^{k}} \, dy \\
&+ \int_{M \setminus B_{R}(x)} 
\frac{K_{\alpha}(C(\eta) \sqrt{|\lambda|} {\rho}(x,y))}{{\rho}(x,y)^{k}} \, dy .
\end{align*}
For the first term one obtains
\begin{align*}
\int_{B_{R}(x)} 
\frac{K_{\alpha}(C(\eta) \sqrt{|\lambda|} {\rho}(x,y))}{{\rho}(x,y)^{k}} \, dy
&\leq 
\tilde{C} 
\int_{{\R}^n}
\frac{K_{\alpha}(C(\eta) \sqrt{|\lambda|} |y| )}{|y|^{k}} \, dy \\
&= \hat{C}(\eta) 
\sqrt{|\lambda|}^{k}
\frac{1}{\sqrt{|\lambda|}^{n}}
\int_{{\R}^n}
\frac{K_{\alpha}(|z| )}{|z|^{k}} \, dz \\
&= \hat{C}(\eta) \sqrt{|\lambda|}^{ 
k - n} \int_0^\infty \int_{\S^{n-1}_r}  
\frac{K_{\alpha}(r)}{r^{k}} \,
\text{dvol}_{\S^{n-1}_r} \, dr \\
&= \check{C}(\eta) \sqrt{|\lambda|}^{ 
k - n} \int_0^\infty 
K_{\alpha}(r) r^{n-1-k} \, dr .
\end{align*}
Since
\begin{align*}
K_{\alpha}(r) = \mathcal{O}(r^{-\alpha})
\end{align*}
for small $r \in \R_+$ and
\begin{align*}
K_{\alpha}(r) = \mathcal{O}\left(\frac{e^{-r}}{\sqrt{r}}\right)
\end{align*}
for large $r \in \R_+$, we have
\begin{align*}
r^{n-1-k} K_{\alpha}(r) = \mathcal{O}(r^{n-1-k-\alpha})
\end{align*}
for small $r \in \R_+$ and
\begin{align*}
r^{n-1-k}  K_{\alpha}(r) = \mathcal{O}(r^{n-\nicefrac{3}{2}-k} e^{-r}) 
\end{align*}
for large $r \in \R_+$. Hence, there exists a constant $\bar{C} < \infty$ such that
\begin{align*}
\int_0^\infty 
K_{\alpha}(r) r^{n-1-k} \, dr < \bar{C} 
\end{align*}
and we conclude that
\begin{align*}
\int_{B_{R}(x)} 
\frac{K_{\alpha}(C(\eta) \sqrt{|\lambda|} {\rho}(x,y))}{{\rho}(x,y)^{k}} \, dy \leq C(\eta) \sqrt{|\lambda|}^{k-n} .
\end{align*}
If $y \in \overline{M} \setminus B_R(x)$, we have $\rho(x,y) \geq R$ 
and therefore
\begin{align*}
\int_{M \setminus B_{R}(x)} 
\frac{K_{\alpha}(C(\eta) \sqrt{|\lambda|} {\rho}(x,y))}{{\rho}(x,y)^{k}} \, dy 
&\leq \frac{K_{\alpha}(C(\eta)R \sqrt{|\lambda|})}{R^k} \text{vol}_g(M \setminus B_{R}(x)) \\
&\leq \hat{C}(\eta) e^{-\tilde{C}(\eta)\sqrt{|\lambda|}} \\
&\leq \bar{C}(\eta) \sqrt{|\lambda|}^{k-n} 
\end{align*}
for $|\lambda|$
since \begin{align*}
K_{\alpha}(r) = \mathcal{O}\left(\frac{e^{-r}}{\sqrt{r}}\right)
\end{align*}
for large $r \in \R_+$. 
\end{proof}

Replacing $x$ by $x^\ast$ this yields an estimate for the reflected kernel.

\begin{cor}\label{Cor. A.3}
Let $\alpha \in \R$, $k \in [0,\infty)$ and $\lambda \in \Sigma_{\pi-\eta}$ for $\eta > 0$. Moreover let $x \in S_{2\epsilon}$.
If $k + \alpha < n$, we obtain 
\begin{align*}
\sup_{x \in S_{2\varepsilon}} 
\int_M \frac{K_{\alpha}(C(\eta)\sqrt{\lambda} \overline{\rho}(x^\ast,y))}{\overline{\rho}(x^\ast,y)^k} \, dy  
\leq C \sqrt{|\lambda|}^{ 
k - n} 
\end{align*}
for $|\lambda|\geq 1$. 
\end{cor}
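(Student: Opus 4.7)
The plan is to mimic the proof of \autoref{Lem. A.2}, observing that everything happens inside the extended manifold $(\widetilde{M}, \bar g)$ rather than $(\overline{M}, g)$. Since $x \in S_{2\varepsilon}$, the reflected point $x^\ast$ lies in the compact set $\sigma(S_{2\varepsilon}) \subset \widetilde{M}\setminus M$, and $\overline{\rho}$ is precisely the (smoothed) distance function of $\bar g$. In particular, for $y$ in $B_R(x^\ast) \subset \widetilde{M}$ with $R$ small enough, $\overline{\rho}(x^\ast,y)$ agrees with the $\bar g$-geodesic distance, and $\bar g$ has uniformly bounded geometry on a neighbourhood of the compact set $\overline{\sigma(S_{2\varepsilon})}$.

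Concretely, I would fix $R > 0$ smaller than the injectivity radius of $\bar g$ on that neighbourhood and split
\[
\int_M \frac{K_\alpha(C(\eta)\sqrt{|\lambda|}\,\overline{\rho}(x^\ast,y))}{\overline{\rho}(x^\ast,y)^k}\,dy
\le \int_{B_R(x^\ast)} \frac{K_\alpha(C(\eta)\sqrt{|\lambda|}\,\overline{\rho}(x^\ast,y))}{\overline{\rho}(x^\ast,y)^k}\,dy
+ \int_{\widetilde{M}\setminus B_R(x^\ast)} \!(\cdots)\,dy,
\]
where the integration is extended to $\widetilde{M}$ at no cost since the integrand is non-negative. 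On $B_R(x^\ast)$ I introduce $\bar g$-normal coordinates centred at $x^\ast$, so $\overline{\rho}(x^\ast,y)$ is comparable to $|y|$ with constants independent of $x^\ast$ in the compact reflected region, and the substitution $z = C(\eta)\sqrt{|\lambda|}\,y$ gives exactly the Euclidean model integral from the proof of \autoref{Lem. A.2}, with the scaling factor $\sqrt{|\lambda|}^{k-n}$ pulled out. The hypothesis $k+\alpha<n$ ensures the resulting radial integral $\int_0^\infty r^{n-1-k}K_\alpha(r)\,dr$ converges, using $K_\alpha(r)=\mathcal{O}(r^{-\alpha})$ as $r\to 0$ and $K_\alpha(r)=\mathcal{O}(e^{-r}/\sqrt{r})$ as $r\to\infty$.

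For the complement $\widetilde{M}\setminus B_R(x^\ast)$ we have $\overline{\rho}(x^\ast,y)\ge R$, so the integrand is bounded by $R^{-k}K_\alpha(C(\eta)R\sqrt{|\lambda|})$; using the exponential decay of $K_\alpha$ and the compactness of $\widetilde{M}$ (or of any neighbourhood covering all images of $\sigma$), this is dominated by $\hat{C}(\eta)\,e^{-\tilde C(\eta)\sqrt{|\lambda|}}$, which for $|\lambda|\ge 1$ is bounded above by $\bar C(\eta)\sqrt{|\lambda|}^{k-n}$. Summing the two pieces and taking the supremum over $x \in S_{2\varepsilon}$ yields the claim.

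The only point requiring care is that the constants in the normal-coordinate comparison $\overline{\rho}(x^\ast,y)\asymp |y|$ and in the volume distortion of the change of variables are uniform in $x^\ast$. This is the main (though mild) obstacle, and it is resolved by the compactness of $\overline{\sigma(S_{2\varepsilon})}$ in $\widetilde{M}$ together with the smoothness of $\bar g$, which provide a uniform injectivity radius and uniform two-sided bounds on the Jacobian of the exponential map.
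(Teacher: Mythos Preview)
Your proposal is correct and follows exactly the approach the paper intends: the paper gives no detailed proof of \autoref{Cor. A.3}, merely noting ``Replacing $x$ by $x^\ast$ this yields an estimate for the reflected kernel,'' and your argument is the natural fleshing-out of that one-line hint, working in $(\widetilde{M},\bar g)$ and invoking compactness of $\overline{\sigma(S_{2\varepsilon})}$ for the uniformity of constants.
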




\newcommand{\etalchar}[1]{$^{#1}$}

\bigskip
\emph{Tim Binz}, University of Tübingen, Department of Mathematics, Auf der Morgenstelle 10, D-72076 Tübingen, Germany,
\texttt{tibi@fa.uni-tuebingen.de}

\end{document}